\newcommand{\ep}{\epsilon}
\newtheorem{theorem}{Theorem}[section]
\newtheorem{lemma}[theorem]{Lemma}
\newtheorem{claim}[theorem]{Claim}
\newtheorem{proposition}[theorem]{Proposition}
\newtheorem{corollary}[theorem]{Corollary}
\theoremstyle{definition}
\newtheorem{definition}[theorem]{Definition}
\theoremstyle{remark}
\newtheorem{remark}[theorem]{Remark}
\numberwithin{equation}{section}
\begin{document}
\selectlanguage{english}

\title{Ideal Free Dispersal under General Spatial Heterogeneity and Time Periodicity}
\author{Robert Stephen Cantell and Chris Cosner} 
\affil{Department of Mathematics, University of Miami\\
\texttt{rsc@math.miami.edu}, \texttt{gcc@math.miami.edu}}
\author{King-Yeung Lam}
\affil{Department of Mathematics, Ohio State University, Columbus, OH, United States\\
\texttt{lam.184@math.ohio-state.edu}}

\date{}
\maketitle
\thispagestyle{empty}

\affil[$\star$]{}

\noindent Keywords:\,Ideal free distribution,\,
Evolutionarily stable strategy,\,
Evolution of dispersal,\,
Reaction-diffusion-advection,\,
Periodic-parabolic problems,\,
Nonlocal information,\,

\medskip
\noindent Mathematics Subject Classification (2010) \quad 35K57$\,\dot\,$35Q92$\,\cdot\,$92D15$\,\cdot\,$92D25
%
%
%
\begin{abstract}
{A population is said to have an ideal free distribution in a spatially heterogeneous but temporally constant environment if each of its members have chosen a fixed spatial location in a way that optimizes its individual fitness, allowing for the effects of crowding. In this paper, we extend the idea of individual fitness associated with a specific location in space to account for the full path that an individual organism takes in space and time over a periodic cycle, and extend the mathematical formulation of an ideal free distribution to general time periodic environments.  We find that, as in many other cases,  populations using dispersal strategies that can produce a generalized ideal free distribution have a competitive advantage relative to populations using strategies that do not produce an ideal free distribution. A sharp criterion on the environmental functions is found to be necessary and sufficient for such ideal free distribution to be feasible. In the case the criterion is met, we showed that there exist dispersal strategies that can be identified as producing a time-periodic version of an ideal free distribution, and such strategies are evolutionarily steady and are neighborhood invaders from the viewpoint of adaptive dynamics. Our results extend previous works in which the environments are either temporally constant, or temporally periodic but the total carrying capacity is temporally constant.} 
\end{abstract}
%

\section{Introduction}\label{sec:1}

The ideal free distribution is by now a well established concept in ecological theory, with profound ramifications in the understanding of evolution of dispersal \cite{Averill2012,Cantrell2010,Cantrell2012JMB,Cantrell2012CAMQ,Cantrell2017,Cosner2014, Korobenko2009,Korobenko2012,Korobenko2014}. The ideal free distribution was initially formulated as a verbal description of the way organisms located themselves in nature \cite{Fretwell1972,Fretwell1969} motivated by observing territorial patterns of birds. It asserts that if the members of a species have complete knowledge of the environment (ideal) and may locate themselves without cost (free), then they will do so in a manner that maximizes fitness, here thought of as local per capita reproductive success. Fitness is assumed to be limited by the presence of conspecifics at the same spatial location. In this framework, an ideal free distribution is achieved by the population when no individual can improve its fitness by moving in a different way.

Suppose the environment is spatially heterogeneous but temporally constant, and that dispersal and population dynamics are coupled additively. In this case one can argue formally that the ideal free distribution is equivalent to fitness being equilibrated at zero. To see that, first we make the reasonable assumption that, like the environment, the ideal free distribution of population is also temporally constant. Then the species in question should continue to increase in abundance so long as fitness remains positive. Therefore, the local population growth should be zero, and the ideal free distribution corresponds to a spatially varying equilibrium of the system in the absence of dispersal. In the particular situation of a mathematical model with logistic growth in a habitat with a favorable resource distribution, under appropriate scaling, such an equilibrium aligns perfectly to the carrying capacity. Indeed, this has been rigorously proven in a number of modeling settings, including reaction-diffusion-advection \cite{Averill2012,Cantrell2010,Korobenko2009,Korobenko2012,Korobenko2014}, discrete diffusion \cite{Cantrell2012JMB}, integro-differential \cite{Cantrell2012CAMQ,CDM2012}, and integro-difference models \cite{Cantrell2018pre}. In these papers,  the ideal free dispersal, manifesting itself as the perfect alignment with carrying capacity, confers a distinct evolutionary advantage to a residential species adopting such strategy. This advantage is expressed through the parlance of adaptive dynamics \cite{odo,Geritz2008,Geritz1998} and focuses on the pairwise invasibility of competing species. We say that a dispersal strategy is evolutionarily
stable (ESS), also known as evolutionarily steady, relative to some classes of strategies if a species adopting this strategy cannot be invaded by an ecologically identical competitor adopting any other strategy from this class \cite{MP}. Precisely, when a rare population of competitors is being introduced into an environment, in which the resident playing the ESS is at {equilibrium}, the population of competitors decays in time. On the other hand, a strategy is a neighborhood invader strategy (NIS) if it can invade any nearby strategy. Strategies which are both ESS and NIS have a clear evolutionary advantage. The results in \cite{Averill2012,Cantrell2010,Cantrell2012JMB,Cantrell2012CAMQ,Cantrell2017,Cantrell2018pre,Cosner2014, Korobenko2009,Korobenko2012,Korobenko2014} show that ideal free dispersal is both an ESS and a NIS robustly across a range of mathematical modeling frameworks in spatially heterogeneous but temporally constant environments.


For the existence of ESS in families of dispersal strategies that do not generate IFD, we refer to \cite{Lam2014a,Lam2014b} and the book chapter \cite{LamLouChapter} for results in the adaptive dynamics framework based on two-species interactions. See also \cite{Hao2017,Lam2017b,Lam2017a} for related results in a framework in which the population is structured by a dispersal trait and space. In the latter framework, the interaction between an infinite number of species is being investigated. 

{The case of environments that are heterogeneous in both space and time differs from the temporally constant case.  In the temporally constant case, a logistically growing population that is initially distributed with a positive density everywhere will grow to exactly match the local carrying capacity everywhere.  Thus, if the initial  conditions are right, a population can achieve an ideal free distribution with no dispersal at all.  It is well known that in the case of simple diffusion in continuous environments, or discrete diffusion in patchy environments according to a fixed dispersal matrix,  there is selection for slower diffusion, and indeed the strategy  of not moving at all is convergence stable (CSS) in the sense that a population with a smaller diffusion rate can invade an ecologically similar population with a faster diffusion rate \cite{Hastings1983, DHMP}.  This fact is related to the more general observation that dispersal resulting in mixing across space reduces growth rates in a wide range of modeling contexts in temporally constant but spatially varying environments,  which is known as the reduction phenomenon \cite{Altenberg}.   In environments that are heterogeneous in both time and space,  there may sometimes be selection for faster diffusion, or there may be stable polymorphisms where competitors with different dispersal rates coexist;  see \cite{Hutson}.   Furthermore, if the local carrying capacity or resource distribution of an environment varies in both space and time, a population cannot match it without some type of dispersal. The motivation for trying to understand the ideal free distribution in time periodic environments is strong.  Many environments are seasonal, and in fact  one of Fretwell's original  works on the ideal free distribution, \cite{Fretwell1972}, is entitled Populations in a Seasonal Environment.  In the special case of a logistic model in an environment where the total carrying capacity of the environment (or alternatively the sum of available resources) across space is constant in time, but whose spatial distribution may vary periodically in time, a dispersal strategy that allows a population to match the  resource distribution exactly is derived and shown to be an ESS and NIS in \cite{Cantrell2018}.  This case differs from the temporally constant case in that to match the resource distribution  in space and time requires the use of nonlocal information about the environment, which is not the case in  static environments, where dispersal strategies based on purely local cues can produce an ideal free distribution.}
 
{There is a large literature on periodic-parabolic models in ecology and population dynamics.  For general background, see \cite{Hess1991}.  More recent results can be found in \cite{ALG1,ALG2,Liu} and the references therein.  An interesting recent application to protection zones is given in \cite{Lopez2019}. There also  has been some work on nonspatial models for populations in periodically varying environments where the populations are structured by a trait that is subject to random mutation. Those lead to periodic-parabolic  reaction-diffusion equations where the diffusion is in trait space rather than physical space; see for example   \cite{CN,Iglesias2018}.}

{There are a number of issues that must be addressed in trying to interpret the concept of an ideal free distribution and understand the evolution of dispersal in environments that vary in both time and space.  It is not immediately clear how individual fitness or an ideal free distribution should be defined in that setting.  In static environments we can view the local population growth rate at a given location as a proxy for the fitness of individuals at that location. However, that definition is inadequate in time varying environments where the fitness of an individual depends not only on its location but also on how it is moving through space and time. Similarly, an ideal free distribution in a static environment can be defined in terms of  habitat selection that optimizes fitness, but in a varying environment phenology (that is, the timing of life history events including migration)  also becomes important. }

{In this article, we continue the investigation of evolution of dispersal strategies in general spatially heterogeneous and time-periodic environments. As is shown by \cite{Cantrell2018}, the analogy between IFD and perfect alignment with carrying capacity can hold only under the limitation that the total carrying capacity is constant in time. In Section \ref{sec:2}, we establish a notion of pathwise fitness, which leads to a broader notion of IFD in the spatially heterogeneous and temporally periodic context. In particular, the generalized version of IFD reduces to and includes the results of \cite{Cantrell2018} when the condition \eqref{eq:perfect} holds. 
In Section \ref{sec:4} we propose a necessary and sufficient condition for the existence of dispersal strategies that enable the species to achieve IFD, and construct a concrete class of such dispersal strategies. 
In Section \ref{sec:3}, we will show that the proposed notion of IFD 
confers the same evolutionary advantage as in the temporally constant case, {in the sense that once} a strategy enables the species to achieve IFD, then it is both ESS and NIS. Morevoer, we will also show that it is necessary in the sense that a time-periodic ecological attractor that is not IFD can always be invaded by an exotic species equipped with a suitably chosen dispersal strategy. In Section \ref{sec:5}, we close with some discussion. }

%
%

\section{Pathwise fitness and IFD in spatially heterogeneous and temporally periodic environments}\label{sec:2}

\subsection{The single species model}
We consider the following class of reaction-diffusion-advection models in the spatially heterogeneous and temporally periodic setting, which models the dynamics of the density $\theta(x,t)$ of a single species:
\begin{equation}\label{eq:theta}
\left\{
\begin{array}{ll}
\frac{\partial \theta}{\partial t} = \nabla \cdot [\mu(x,t) \nabla \theta - \theta \vec{P}(x,t)]  + r(x,t)\theta\left( 1 - \frac{\theta}{K(x,t)}\right) &\text{ in }\Omega \times (0,\infty),\\
n \cdot [\mu(x,t) \nabla \theta - \theta \vec{P}(x,t)]  = 0 &\text{ on }\partial\Omega \times (0,\infty),\\
\theta(x,0) = \theta_0(x) &\text{ in }\Omega.
\end{array}
\right.
\end{equation}
where $\Omega$ is a bounded domain in $\mathbb{R}^n$ with smooth boundary $\partial\Omega$ and unit outer normal vector $n=n(x)$; $\mu(x,t)$ is the diffusion rate; $\vec{P}(x,t)$ the vector field describing the directed movement; $r(x,t)$ is the local intrinsic growth rate; $K(x,t)$ is the carrying capacity, and $n$ denotes the outer unit normal vector on $\partial\Omega$.  Furthermore, the environment $(r,K)$ and dispersal strategies $(\mu,\vec{P})$ are assumed to be $T$-periodic in $t$. We assume that there is no net movement into or out of the domain, as is reflected by an application of divergence theorem:
\begin{equation}
\int_\Omega \nabla\cdot[\mu(x,t) \nabla \theta - \theta \vec{P}(x,t)]\,dx= 0.
\end{equation}
It is well known that, for given positive, {$T$-periodic coefficients $\mu,r,K$ and $T$-periodic vector field $\vec{P}$}, the equation \eqref{eq:theta} has a unique positive periodic solution $\theta^*(x,t)$, which is globally asymptotically stable among all nonnegative, nontrivial solutions \cite{Hess1991}. 

\subsection{A notion of fitness}

When the environment is temporally constant (i.e. $r=r(x)$ and $K=K(x)$), the stable population will be at equilibrium $\theta^*(x)$, so that the reproductive fitness can be understood as a function depending on location only:
$$
F(x) = r(x) \left( 1 - \frac{\theta^*(x)}{K(x)}\right).
$$
In such a case the IFD is realized when fitness is equilibrated, so that the IFD equilbrium corresponds to the perfect alignment with carrying capacity:
$$
\theta^*(x) \, \equiv \, K(x).
$$
In contrast, when the environment is time-periodic, we expect the population to stabilize at a time-periodic solution $\theta^*(x,t)$, so that the reproductive fitness function depends nontrivially on both space and time:
$$
F(x,t) = r(x,t) \left( 1- \frac{\theta^*(x,t)}{K(x,t)}\right).
$$
In case $r(x,t)=K(x,t)$, the first two authors \cite{Cantrell2018} show that, under the additional assumption that the carrying capacity is everywhere positive and that the total level of resources in the environment remains constant in time, precisely,  
\begin{equation}\label{eq:perfect}
K(x,t) >0 \quad \text{ for all }x,t, \quad \text{ and }\quad \int K(x,t)\,dx = \text{const.},
\end{equation}
there exists a class of dispersal strategies (i.e. choices of $\mu$ and $\vec{P}$) under which the ideal free distribution can be achieved in the form of perfect alignment with carrying capacity, i.e. $\theta^*(x,t) = K(x,t)$. The authors went on to show that such dispersal strategies are both ESS and NIS. However, as observed in \cite{Cantrell2018}, perfect alignment with carrying capacity is impossible if \eqref{eq:perfect} is false. Indeed, consider the simplest situation of a single patch with $r(t)=K(t)$ being nonconstant and $T$-periodic in $t$. Being in a single patch means there is no net movement, so that the population will stabilize at the unique positive periodic solution $\theta^*(t)$ satisfying
$$
\frac{d}{dt}\theta^*(t) = (K(t) - \theta^*(t))\theta^*(t)
$$
which clearly cannot satisfy $\theta^*(t) = K(t)$ for all $t$.  The same holds for environments which are spatially homogeneous and temporally periodic.

Here, we propose the pathwise fitness of an individual traveling a path $\gamma(t)$ during the time period $[0,T]$ to be
$$
F_{\rm path}(\gamma) = \int_0^T \left[r(x,t) \left( 1 - \frac{\theta^*(x,t)}{K(x,t)}\right) \right]_{x=\gamma(t)}\,dt.
$$
Suppose the population achieves ideal free distribution, then we expect that no individual can gain proliferative advantage by dispersing itself differently within the time period $[0,T]$, so that the pathwise fitness $F_{\rm path}(\gamma)$ will be equilibrated among all possible paths $\gamma$. This motivates the following definition.
\begin{definition}\label{def:IFD}
Suppose the population stabilizes at a positive periodic solution $\theta^*(x,t)$, we say that $\theta^*(x,t)$ is an ideal free distribution (or IFD in short) if
$$
F(x,t) =  r(x,t) \left( 1- \frac{\theta^*(x,t)}{K(x,t)}\right) \quad \text{ is independent of }x.
$$
\end{definition}
Observe that
$$
\inf_\gamma F_{\rm path}(\gamma) = \int_0^T \inf_{x \in \Omega} F(x,t)\,dt \quad \text{ and }\quad \sup_\gamma F_{\rm path}(\gamma) = \int_0^T \sup_{x \in \Omega} F(x,t)\,dt,
$$
where the infimum and supremum are taken over all continuous paths $\gamma: [0,T] \to \Omega$. (We remark that the set of test functions can be further reduced to the class of all smooth periodic paths, by a density argument.) Hence,
 a distribution $\theta^*(x,t)$ is IFD if and only if the corresponding pathwise fitness $F_{\rm path}(\gamma)$ is independent of paths $\gamma$. Observe that total alignment with resource (i.e. $\theta^*(x,t) = K(x,t)$ for all $x$ and $t$) is a sufficient, but not necessary, condition for IFD in the sense of Definition \ref{def:IFD}.

\section{The existence of IFD strategy}\label{sec:4}


The goal of this section is to give a sufficient and necessary condition, in terms of the environmental functions $r(x,t)$ and $K(x,t)$, so that there exists a class of dispersal strategies $(\mu,\vec{P})$ whose corresponding positive $T$-periodic solution of \eqref{eq:theta} is an IFD. 

{We outline our main ideas as follows. First, we define a periodic function $M(t)$ by solving a Bernoulli-type equation (see Lemma \ref{lem:M}). Second, we consider the transformation $\tilde\theta(x,t)=\theta(x,t)/M(t)$, which 
transforms the single species problem \eqref{eq:theta} into the following:
\begin{equation}\label{eq:thetap}
\left\{
\begin{array}{ll}
\frac{\partial \tilde\theta}{\partial t} = \nabla \cdot [\mu(x,t) \nabla \tilde\theta - \tilde\theta \vec{P}(x,t)]  + \tilde r(x,t)\tilde\theta\left( 1 - \frac{\tilde\theta}{\tilde{K}(x,t)}\right) &\text{ in }\Omega \times (0,\infty),\\
n \cdot [\mu(x,t) \nabla \tilde\theta - \tilde\theta \vec{P}(x,t)]  = 0 &\text{ on }\partial\Omega \times (0,\infty),\\
\tilde\theta(x,0) = \tilde\theta_0(x) &\text{ in }\Omega,
\end{array}
\right.
\end{equation}
where
\begin{equation}\label{eq:rp}
\tilde{r}(x,t) = r(x,t) - \frac{M'(t)}{M(t)}, 
\end{equation}
and
\begin{equation}\label{eq:m}
\tilde{K}(x,t)= \frac{K(x,t)}{M(t)r(x,t)} \left( r(x,t) - \frac{M'(t)}{M(t)}\right) = \frac{K(x,t)}{M(t)} - \frac{K(x,t)}{r(x,t)}\cdot \frac{M'(t)}{M(t)^2}.
\end{equation}
Then we have an equivalent system where the new carrying capacity $\tilde{K}(x,t)$ satisfies, by our choice of $M(t)$ in Lemma \ref{lem:M}, 
\begin{equation}\label{eq:m2m}
\int \tilde{K}(x,t)\,dx = 1 \quad \text{ for all }t.
\end{equation}
Provided that the positivity conditions $\tilde{r},\tilde{K}>0$ (see \eqref{eq:R}) hold, we can apply the results in \cite{Cantrell2018} to yield a IFD of \eqref{eq:thetap} that perfectly matches the new carrying capacity $\tilde{K}(x,t)$. This gives a mathematically natural way of extending the notion of IFD to general time-periodic environments. The condition \eqref{eq:R}, which corresponds to the postivity of carrying capacity in the transformed problem \eqref{eq:thetap}, can be interpreted as a criterion that prevents the appearance of generalized sinks in the original problem \eqref{eq:theta}.

In Subsection \ref{subsect:2.1}, we state the condition \eqref{eq:R}, under which we can define a IFD $\theta^*(x,t) = M(t)\tilde{K}(x,t)$, where $M(t)$ and $\tilde{K}(x,t)$ are defined in terms of the environmental data $r(x,t), K(x,t)$ by solving a Bernoulli-type equation. In Subsection \ref{subsect:2.2}, we will show that the condition \eqref{eq:R} is necessary, and that the IFD $\theta^*(x,t)=M(t)\tilde{K}(x,t)$ is the only possible form of IFD for the class of population dynamics as described by the reaction-diffusion-advection equation \eqref{eq:theta}. In Subsection \ref{subsect:2.3}, we construct a class of dispersal strategies which generates IFD.}

\subsection{The Bernoulli equation}\label{subsect:2.1}
Given the environmental functions $r(x,t), K(x,t)$ we define $M(t)$ as follows.
\begin{lemma}\label{lem:M}
There exists a unique, positive, $T$-periodic function $M(t)$ such that
\begin{equation}\label{eq:MMM}
\frac{\overline{K}(t)}{M(t)} -  \overline{(K/r)}(t)\frac{M'(t)}{M(t)^2} = 1.
\end{equation}
where hereafter, we denote by $\bar{f}(t)$ the spatial average of $f(x,t)$, as given by $\frac{1}{|\Omega|}\int_\Omega f(x,t)\,dx$. 
\end{lemma}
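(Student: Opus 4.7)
The plan is to linearize \eqref{eq:MMM} via the substitution $u(t) = 1/M(t)$ and then apply standard existence/uniqueness theory for periodic solutions of linear scalar ODEs. Since $u' = -M'/M^2$, multiplying \eqref{eq:MMM} by $u(t)$ rewrites it as
\begin{equation*}
\overline{(K/r)}(t)\, u'(t) + \overline{K}(t)\, u(t) = 1,
\end{equation*}
which is a first-order linear inhomogeneous ODE
\begin{equation*}
u'(t) + a(t)\, u(t) = b(t),\qquad a(t) := \frac{\overline{K}(t)}{\overline{(K/r)}(t)},\quad b(t) := \frac{1}{\overline{(K/r)}(t)}.
\end{equation*}
Because $r$ and $K$ are positive and $T$-periodic, both $a$ and $b$ are continuous, strictly positive, and $T$-periodic.

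Next, I would apply the integrating factor $\exp(A(t))$ with $A(t) = \int_0^t a(s)\,ds$ to obtain the explicit representation
\begin{equation*}
u(t) = e^{-A(t)}\left( u(0) + \int_0^t e^{A(s)} b(s)\, ds\right),
\end{equation*}
and then impose the periodicity requirement $u(T) = u(0)$. Since $a > 0$ gives $A(T) > 0$, the factor $e^{A(T)} - 1$ is strictly positive, so the periodicity condition determines the initial value uniquely by
\begin{equation*}
u(0) = \frac{\int_0^T e^{A(s)} b(s)\, ds}{e^{A(T)} - 1} > 0,
\end{equation*}
where positivity follows because $b > 0$. This simultaneously gives existence and uniqueness of a $T$-periodic solution $u$.

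Finally, I would verify that $u(t) > 0$ for all $t \in [0,T]$, which is needed to define $M = 1/u$. This is immediate from the explicit formula: with $u(0) > 0$ and $b(s) > 0$, the bracketed quantity is strictly positive for every $t$, so $u > 0$ throughout. (Equivalently, if $u$ were to vanish at some $t_0$, the ODE would give $u'(t_0) = b(t_0) > 0$, which combined with periodicity contradicts the fact that the explicit representation starts from a positive value.) Setting $M(t) := 1/u(t)$ then yields a positive, $T$-periodic $C^1$ function satisfying \eqref{eq:MMM}, and its uniqueness is inherited from that of $u$.

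I do not anticipate a genuine obstacle here: the entire argument reduces to an elementary Floquet-type computation for a scalar linear ODE. The only point requiring mild care is maintaining strict positivity of $u$ across the whole period, but this is handled by inspection of the closed-form solution.
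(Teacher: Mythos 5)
Your proposal is correct and follows essentially the same route as the paper: the substitution $u=1/M$ (the paper calls it $w$) turns \eqref{eq:MMM} into the linear equation $u'+a(t)u=b(t)$ with exactly the same $a$ and $b$ as in \eqref{eq:ab}, and the integrating factor plus the periodicity condition $u(0)=u(T)$ pins down $u(0)$ uniquely, with positivity of $u$ read off from the closed form. Your explicit check that $u>0$ throughout the period is slightly more detailed than the paper's, but the argument is the same.
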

\begin{proof}
It remains to solve
\begin{equation*}
\frac{M'(t)}{M(t)} + b(t) M(t) = a(t).
\end{equation*}
where
\begin{equation}\label{eq:ab}
a(t):= \frac{\overline{K}(t)}{\overline{(K/r)}(t)}\quad \text{ and }\quad b(t) = \frac{1}{\overline{(K/r)}(t)}.
\end{equation}
First we show uniqueness. Suppose $M(t)$ is a positive $T$-periodic solution of \eqref{eq:MMM}, then $w(t) = 1/M(t)$ satisfies
$$
-w' + b(t) = a(t)w
$$ 
so that
$$
\left[ \exp\left( \int_0^t a(s)\,ds\right) w(t)\right]' = b(t) \exp\left( \int_0^t a(s)\,ds\right).
$$
Integrating the above, we obtain
$$
w(t) = \exp\left( -\int_0^t a(s)\,ds\right) w(0) + \int_0^t b(s) \exp\left( -\int_s^t a(\tau)\,d\tau\right)\,ds.
$$
By invoking the periodicity $w(0) = w(T)$, we further determine $w(0)$ uniquely. That is,
\begin{align}
\frac{1}{M(t)} = w(t) &=  \exp\left( -\int_0^t a(s)\,ds\right) \frac{\int_0^T b(s) \exp\left( -\int_s^T a(\tau)\,d\tau\right)\,ds}{1 - \exp\left( -\int_0^T a(s)\,ds\right)} \notag \\
&\quad + \int_0^t b(s) \exp\left( -\int_s^t a(\tau)\,d\tau\right)\,ds.\label{eq:MT}
\end{align}
This proves uniqueness. Conversly, \eqref{eq:MT} also defines a solution to \eqref{eq:MMM}, so that existence follows immediately.
\end{proof}
Next, we define $\tilde{K}(x,t)$ in terms of $r(x,t), K(x,t)$ and $M(t)$, by \eqref{eq:m},
and define
\begin{equation}\label{eq:define_ifd}
\theta^*(x,t):= M(t)\tilde{K}(x,t).
\end{equation}
We show that $\theta^*$  defines an ideal free distribution.
\begin{lemma}\label{lem:IFD}
Assume 
\begin{equation}\label{eq:R}
K(x,t) > \left( K/r\right)(x,t) \frac{M'(t)}{M(t)}
\end{equation}
Then the distribution $\theta^*$ is an IFD. Precisely, the fitness function satisfies
\begin{equation}\label{eq:misifd}
r(x,t)\left[ 1 - \frac{\theta^*(x,t)}{K(x,t)}\right]=r(x,t)\left[ 1 - \frac{M(t)\tilde{K}(x,t)}{K(x,t)}\right] = \frac{M'(t)}{M(t)}.
\end{equation}
\end{lemma}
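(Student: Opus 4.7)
The proof will be essentially a direct verification, so my plan is to unpack the definitions carefully rather than invoke deep machinery.

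First I would check that $\theta^*(x,t) = M(t)\tilde{K}(x,t)$ is indeed a positive function, so that it is a candidate for a biologically meaningful density. Rewriting the definition \eqref{eq:m} as
\[
M(t)\tilde{K}(x,t) = K(x,t) - \frac{K(x,t)}{r(x,t)} \cdot \frac{M'(t)}{M(t)},
\]
we see that positivity of $M(t)\tilde{K}(x,t)$ is exactly the hypothesis \eqref{eq:R}. Since $M(t) > 0$ by Lemma \ref{lem:M}, this also gives $\tilde{K}(x,t) > 0$. So the assumption \eqref{eq:R} is precisely what makes the definition \eqref{eq:define_ifd} admissible.

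Next I would verify the fitness identity \eqref{eq:misifd} by straight computation. Using the displayed formula for $M(t)\tilde{K}(x,t)$ above, we get
\[
\frac{\theta^*(x,t)}{K(x,t)} = \frac{M(t)\tilde{K}(x,t)}{K(x,t)} = 1 - \frac{1}{r(x,t)} \cdot \frac{M'(t)}{M(t)},
\]
so that
\[
r(x,t)\left[1 - \frac{\theta^*(x,t)}{K(x,t)}\right] = \frac{M'(t)}{M(t)}.
\]
The right-hand side depends only on $t$, hence the fitness $F(x,t)$ is independent of $x$. By Definition \ref{def:IFD} this is precisely the statement that $\theta^*$ is an IFD.

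I do not anticipate any real obstacle here: the content of the lemma is that the construction of $M(t)$ from the Bernoulli equation \eqref{eq:MMM} and the subsequent definition \eqref{eq:m} of $\tilde{K}$ was rigged exactly so that the fitness becomes $x$-independent. The only subtlety worth stressing is the role of \eqref{eq:R}: without it, $\theta^*$ would fail to be a positive density and could not serve as a periodic solution of \eqref{eq:theta} in the biologically admissible class (and in particular would not correspond to a meaningful IFD). The question of \emph{which} dispersal strategies $(\mu,\vec{P})$ actually realize $\theta^*$ as the stable periodic solution of \eqref{eq:theta} is separate, and will be addressed in Subsection \ref{subsect:2.3}.
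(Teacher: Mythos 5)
Your proposal is correct and follows essentially the same route as the paper: positivity of $\tilde{K}$ (hence of $\theta^*$) from \eqref{eq:R}, and the identity \eqref{eq:misifd} by directly rewriting \eqref{eq:m}. The paper's proof is just a terser version of your computation, so there is nothing to add.
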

\begin{proof}
By \eqref{eq:R}, the function $\tilde{K}(x,t)$ defined in \eqref{eq:m} is positive, so that $\theta^*(x,t) = M(t)\tilde{K}(x,t)$ is also positive. Finally, \eqref{eq:misifd} follows by rewriting \eqref{eq:m}.
\end{proof}
\begin{remark} 
The condition \eqref{eq:R} can also be written as $r(x,t) > M'(t)/M(t)$. We prefer the above formulation, in view of the fact that $K$ and $K/r$ have already appeared in the definition of $M(t)$. Also, by combining \eqref{eq:m} and \eqref{eq:MMM}, we have
\begin{equation}\label{eq:mm}
\tilde{K}(x,t) = 
\frac{K(x,t) - \overline{K}(t)}{M(t)} + \frac{(K/r)(x,t)}{\overline{(K/r)}(t)}-  \frac{\overline{K}(t)}{M(t) } \left[ \frac{(K/r)(x,t)}{\overline{(K/r)}(t)} - 1\right].
\end{equation}
This gives yet another equivalent formulation of \eqref{eq:R}, which is
\begin{equation}\label{eq:R3}
\frac{K(x,t) - \overline{K}(t)}{M(t)} >    \frac{\overline{K}(t)}{M(t) } \left[ \frac{(K/r)(x,t)}{\overline{(K/r)}(t)} - 1\right] - \frac{(K/r)(x,t)}{\overline{(K/r)}(t)}.
\end{equation}
\end{remark}

\begin{remark}\label{rmk:2.4}
\begin{itemize}
\item[{\rm(a)}] If $r\equiv 1$, then \eqref{eq:R3} and thus \eqref{eq:R} holds for any $K(x,t)>0$. The corresponding IFD is given by
 $$
\theta^*(x,t)= \frac{M(t)}{\overline{K}(t)} K(x,t).
 $$
 Note that  $\theta^*(x,t)$ is proportional in space to $K(x,t)$, for each $t$.
  
\item[{\rm(b)}] If $K \equiv 1$, then $a(t) = b(t) = [\overline{(1/r)}(t)]^{-1}$ and $M(t) \equiv 1$ and the IFD is homogeneous 1,
and the IFD strategy can simply be taken to be the homogeneous diffusion operator.

\item[{\rm(c)}]{If we assume $\overline{(K/r)}(t) \equiv 1$ and $\overline{K}(t) = \text{const.}$, then $M(t) = \overline{K}(t)$, so that \eqref{eq:R3} and \eqref{eq:R} 
is reduced to the requirements $r(x,t),K(x,t)>0$. This case includes the results in \cite{Cantrell2018}.}

\item[{\rm(d)}] {Consider the special case $r(x,t)\equiv K(x,t)$, which is the simplified logistic model considered in \cite{Cantrell2018}.  Fix an arbitrary $\rho(t)$ which is positive, periodic, and non-constant, and define
 $M(t)$ to be the unique periodic solution to
$$
\frac{M'(t)}{M(t)} + M(t) = \rho(t).
$$
Then it is easy to see that $M'(t)/M(t)$ changes sign in $t$. Hence, 
the condition \eqref{eq:R} fails if we choose $K(x,t)=r(x,t)$ such that 
$$
\bar{r}(t) = \rho(t),\quad \text{ and }\quad  \inf_{x \in \Omega} r(x,t) - \frac{M'(t)}{M(t)} \,\,\text{ changes sign}.
$$
}

\end{itemize}

%
%

\end{remark}

\subsection{Necessarily condition for an IFD}\label{subsect:2.2}
{In the previous subsection, we saw that $\theta^*(x,t) = M(t)\tilde{K}(x,t)$ is an IFD. In this subsection we will show that this is the only possibility. For this purpose, consider the $N$-species competition model
\begin{equation}\label{eqn}
\begin{cases}
\frac{d u_i}{dt} = \nabla \cdot [ \mu_i(x,t) \nabla u_i  - u_i \vec{P}_i(x,t)] + r(x,t) u_i \left( 1- \frac{\sum_{j=1}^N u_j}{K(x,t)}\right) &\text{ in }\Omega\times(0,\infty), \text{ for }1 \leq i \leq N,\\
n \cdot [\mu_i(x,t)\nabla u_i - u_i \vec{P}_i(x,t)]=0 &\text{ on }\partial\Omega \times(0,\infty), \text{ for }1 \leq i \leq N,\\
u_i(x,0) = u_{i,0}(x) &\text{ in }\Omega,\text{ for }1 \leq i \leq N.
\end{cases}
\end{equation}
where $\mu_i(x,t), \vec{P}_i$ are $T$-periodic in $t$ and $r(x,t),K(x,t)$ are as before. 
\begin{definition}
Let $(\tilde{u}_i(x,t))_{i=1}^N$ be a positive, $T$-periodic solution of \eqref{eqn}. We say that $(\tilde{u}_i(x,t))_{i=1}^N$ is an IFD if $\theta^*(x,t)=\sum_{i=1}^N \tilde{u}_i(x,t)$ is an IFD, i.e.
$$
r(x,t) \left( 1 - \frac{\sum_{i=1}^N \tilde{u}_i(x,t)}{K(x,t)}\right) \quad \text{ depends on }t\text{ only}.
$$
\end{definition}
\begin{theorem}\label{prop:nec}
Let $(\tilde{u}_i(x,t))_{i=1}^N$ be a positive, $T$-periodic solution of \eqref{eqn}.
Suppose $(\tilde{u}_i(x,t))_{i=1}^N$ is an IFD, 
then \eqref{eq:R} holds and $$\sum_{i=1}^N\tilde{u}_i(x,t) = M(t)\tilde{K}(x,t),$$ where $M(t)$ and $\tilde{K}(x,t)$ are uniquely determined by $r(x,t)$ and $K(x,t)$, via Lemma \ref{lem:M} and \eqref{eq:m} respectively. 
\end{theorem}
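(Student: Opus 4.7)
The plan is to show that the IFD assumption, combined with conservation of total mass, forces the total population $\theta^*(x,t) := \sum_{i=1}^N \tilde{u}_i(x,t)$ to satisfy the very Bernoulli relation of Lemma \ref{lem:M}, which pins down its form and automatically yields condition \eqref{eq:R}.

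First I would exploit the IFD assumption to obtain an explicit pointwise expression for $\theta^*$. By definition, there is a $T$-periodic function $g(t)$ with
\begin{equation*}
r(x,t)\left(1 - \frac{\theta^*(x,t)}{K(x,t)}\right) = g(t) \quad \text{for all } (x,t),
\end{equation*}
which can be rearranged to give
\begin{equation*}
\theta^*(x,t) = K(x,t) - \frac{K(x,t)}{r(x,t)}\, g(t).
\end{equation*}
This is the analog of the fitness identity \eqref{eq:misifd}; the job is to identify $g$ with $M'/M$ for the $M$ produced by Lemma \ref{lem:M}.

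Next I would derive an ODE for the total mass. Summing the $N$ equations in \eqref{eqn} over $i$, the reaction term collapses to $r(x,t)\theta^*(1 - \theta^*/K) = g(t)\theta^*(x,t)$ by the IFD relation. Integrating over $\Omega$ and using the no-flux boundary conditions kills every flux contribution (even though the $\mu_i,\vec{P}_i$ are distinct), leaving
\begin{equation*}
\frac{d}{dt} \int_\Omega \theta^*(x,t)\,dx = g(t) \int_\Omega \theta^*(x,t)\,dx.
\end{equation*}
Setting $M(t) := |\Omega|^{-1}\int_\Omega \theta^*(x,t)\,dx$, positivity and $T$-periodicity of $\theta^*$ give that $M$ is positive and $T$-periodic, and the ODE reads $g(t) = M'(t)/M(t)$. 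Taking the spatial average of the pointwise identity for $\theta^*$ then gives
\begin{equation*}
M(t) = \overline{K}(t) - \overline{(K/r)}(t)\,\frac{M'(t)}{M(t)},
\end{equation*}
which upon dividing by $M(t)$ is exactly \eqref{eq:MMM}. By the uniqueness part of Lemma \ref{lem:M}, this $M$ coincides with the function constructed there.

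Finally I would match the forms: substituting $g(t) = M'(t)/M(t)$ back into the expression for $\theta^*$ and comparing with the definition \eqref{eq:m} of $\tilde{K}$ yields $\theta^*(x,t) = M(t)\tilde{K}(x,t)$, establishing the claimed formula. Positivity of $\theta^*$ together with $M>0$ forces $\tilde{K}(x,t)>0$, which is equivalent to \eqref{eq:R}. I do not expect any genuine obstacle here; the only subtlety is recognizing that integration against no-flux boundary data eliminates all diffusion and advection contributions simultaneously, so the different $(\mu_i,\vec P_i)$ never interfere with reducing \eqref{eqn} to a scalar ODE for $M(t)$.
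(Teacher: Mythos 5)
Your proposal is correct and follows essentially the same route as the paper's proof: use the IFD assumption to write the fitness as a spatially constant $g(t)$, integrate the summed system over $\Omega$ with the no-flux conditions to obtain $\frac{d}{dt}\overline{\theta^*}=g\,\overline{\theta^*}$ and identify $g=M'/M$ with $M=\overline{\theta^*}$, average the pointwise identity to recover the Bernoulli equation \eqref{eq:MMM} and invoke the uniqueness in Lemma \ref{lem:M}, then read off $\theta^*=M\tilde{K}$ and deduce \eqref{eq:R} from positivity. The only cosmetic difference is that the paper first defines $M$ by an explicit exponential formula and then proves $M=\overline{\theta^*}$, whereas you define $M$ as the spatial average directly; the substance is identical.
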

\begin{remark}
As noted in Remark \ref{rmk:2.4}(d), there exists positive periodic $r(x,t),K(x,t)$ such that \eqref{eq:R} does not hold, and thus Corollary \ref{lem:necMm} shows that IFD is impossible for such environments.
\end{remark}
}
\begin{proof}[Proof of Theorem \ref{prop:nec}]
First, we define $\theta^*(x,t)=\sum_{i=1}^N \tilde{u}_i(x,t)$ and then define $M(t)$ as
\begin{equation}\label{eq:M0}
M(t) 
=  \overline{\theta^*}(0)\exp\left[ \int_0^t  r(x,s) \left( 1 - \frac{\theta^*(x,s)}{K(x,s)}\right)\,ds\right], 
\end{equation}
where $\overline{\theta^*}(0) = \frac{1}{|\Omega|}\int_\Omega \theta^*(x,0)\,dx$. (Note that the right hand side of \eqref{eq:M0} is independent of $x$, since $\theta^*$ is an IFD.) Then $M(t)$ is positive, and satisfies 
\begin{equation}\label{eq:thetaMM}
 \frac{M'(t)}{M(t)} = r(x,t) \left( 1 - \frac{\theta^*(x,t)}{K(x,t)}\right)
\end{equation}
and the equation of $\tilde{u}_i(x,t)$ becomes
\begin{equation}\label{eq:thetaM}
\frac{\partial \tilde{u}_i}{\partial t} = \nabla \cdot [\mu_i \tilde{u}_i - \tilde{u}_i \vec{P}_i] + \frac{M'(t)}{M(t)}\tilde{u}_i.
\end{equation}
Using the no-flux boundary conditions of $\tilde{u}_i$, we can integrate \eqref{eq:thetaM} in $\Omega$ to get
\begin{equation*}
\frac{d}{dt} \overline{\tilde{u}_i}(t) = \frac{M'(t)}{M(t)} \overline{\tilde{u}_i}(t).
\end{equation*}
Adding in $i=1,...,N$, we obtain
\begin{equation}\label{eq:thetaMMM}
\frac{d}{dt} \overline{\theta}^*(t) = \frac{M'(t)}{M(t)} \overline{\theta}^*(t).
\end{equation}
Dividing \eqref{eq:thetaMMM} by $\overline{\theta^*}(t)>0$, and setting $t=0$ in \eqref{eq:M0}, we have
$$
(\log M)'(t) = (\log \overline{\theta^*})'(t)  \quad \text{ for all }t\geq 0, \quad \text{ and }\quad M(0) = \overline{\theta^*}(0).
$$
Therefore, 
\begin{equation}\label{eq:thetaMMMM}
\overline{\theta^*}(t) = M(t) \quad \text{ for all }t.
\end{equation}
In particular, $M(t)$ is $T$-periodic in $t$ as well.

Next, we show that $M(t)$ satisfies \eqref{eq:MMM}. Indeed,
multiply both sides of \eqref{eq:thetaMM} by $K(x,t)/r(x,t)$ to get
$$
\frac{K(x,t)}{r(x,t)} \frac{M'(t)}{M(t)} = K(x,t) - \theta^*(x,t).
$$
Integrate the above over in $x\in \Omega$, and use \eqref{eq:thetaMMMM}, we get
$$
\overline{K/r}(t) \frac{M'(t)}{M(t)} = \overline{K}(t) - M(t),
$$
which is equivalent to \eqref{eq:MMM}. 

Finally, define $\tilde{K}(x,t) = \theta^*(x,t)/M(t)$, we deduce from \eqref{eq:thetaMM} that
\begin{equation}\label{eq:R12}
\frac{M'(t)}{M(t)} = r(x,t) \left( 1 - \frac{M(t)\tilde{K}(x,t)}{K(x,t)}\right)
\end{equation}
which is equivalent to \eqref{eq:m}. 

Finally, \eqref{eq:R12} implies $r(x,t) > M'(t)/M(t)$, which implies that
\eqref{eq:R} holds. 
\end{proof}

\begin{corollary}\label{lem:necMm}
Let $\theta^*(x,t)$ be a positive solution of of the single species problem \eqref{eq:theta}. 
Suppose $\theta^*$ is an IFD, i.e.
$$
r(x,t) \left( 1 - \frac{\theta^*(x,t)}{K(x,t)}\right) \quad \text{ depends on }t\text{ only},
$$
then $\theta^*(x,t) = M(t)\tilde{K}(x,t)$ and \eqref{eq:R} holds, where $M(t)$ and $\tilde{K}(x,t)$ are uniquely determined by $r(x,t)$ and $K(x,t)$, via Lemma \ref{lem:M} and \eqref{eq:m} respectively. 
\end{corollary}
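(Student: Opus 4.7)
The plan is to observe that this corollary is the direct $N=1$ specialization of Theorem \ref{prop:nec}. The single-species problem \eqref{eq:theta} coincides with the $N$-species competition system \eqref{eqn} when $N=1$, under the identification $u_1=\theta$, $\mu_1=\mu$, $\vec{P}_1=\vec{P}$. Consequently, a positive $T$-periodic solution $\theta^*(x,t)$ of \eqref{eq:theta} is a positive $T$-periodic solution $(\tilde{u}_1)=(\theta^*)$ of \eqref{eqn} with $N=1$, and the hypothesis that the fitness $r(x,t)(1-\theta^*/K)$ depends only on $t$ is exactly the condition that $\sum_{i=1}^{1}\tilde{u}_i=\theta^*$ is an IFD in the sense used in Theorem \ref{prop:nec}.

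Applying Theorem \ref{prop:nec} with $N=1$ then yields $\theta^*(x,t) = \sum_{i=1}^{1}\tilde{u}_i(x,t) = M(t)\tilde{K}(x,t)$ together with the validity of \eqref{eq:R}, where $M(t)$ and $\tilde{K}(x,t)$ are the unique functions determined by $r(x,t)$ and $K(x,t)$ via Lemma \ref{lem:M} and the formula \eqref{eq:m}. Since the corollary is a direct specialization of the already-proved theorem, there is no substantive obstacle; the only thing to double-check is that the IFD condition as phrased here (fitness independent of $x$) matches the definition used in the multi-species setting, which it does by construction. If one preferred a self-contained argument, one could simply repeat the three steps of the proof of Theorem \ref{prop:nec} in the single-species case: define $M(t)$ by \eqref{eq:M0}, integrate the equation for $\theta^*$ over $\Omega$ to obtain $\overline{\theta^*}(t) = M(t)$ and hence the $T$-periodicity of $M$, then multiply the identity $M'/M = r(1 - \theta^*/K)$ by $K/r$ and integrate in $x$ to derive the Bernoulli relation \eqref{eq:MMM}; the uniqueness portion of Lemma \ref{lem:M} then forces $M$ to agree with the function constructed there, and \eqref{eq:R} follows from positivity of $\tilde{K} = \theta^*/M$.
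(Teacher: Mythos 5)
Your proposal is correct and matches the paper's intent: the paper states Corollary \ref{lem:necMm} without proof precisely because it is the $N=1$ specialization of Theorem \ref{prop:nec}, which is exactly the reduction you make (and your sketched self-contained argument mirrors the theorem's proof step for step).
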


\subsection{Sufficient condition for existence of an IFD strategy}\label{subsect:2.3}
%

\begin{theorem}\label{thm:existence}
Let $r(x,t),K(x,t)$ be given, and let $M(t), \tilde{K}(x,t)$ be given respectively by \eqref{eq:MT} and \eqref{eq:m} in terms of $r(x,t)$ and $K(x,t)$. Suppose \eqref{eq:R} holds. Then for each $\mu(x,t)>0$, there exists a suitable $\vec{P}(x,t)$ such that the corresponding positive periodic solution $\theta^*(x,t)$ is an IFD. In fact, $\theta^*(x,t) = M(t)\tilde{K}(x,t)$.
\end{theorem}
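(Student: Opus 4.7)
The plan is to reverse-engineer an advection field $\vec{P}$ that makes the explicit candidate $\theta^*(x,t) = M(t)\tilde{K}(x,t)$ a genuine solution of \eqref{eq:theta}. First I would substitute this ansatz into the PDE. By Lemma \ref{lem:IFD}, the reaction term $r\theta^*(1 - \theta^*/K)$ equals $(M'/M)\theta^* = M'(t)\tilde{K}(x,t)$, which precisely cancels the $M'\tilde{K}$ produced by $\partial_t(M\tilde{K})$. What remains is a purely spatial identity
$$
\nabla \cdot [\mu(x,t) \nabla \theta^* - \theta^* \vec{P}] \;=\; M(t)\,\partial_t \tilde{K}(x,t) \quad \text{in } \Omega,
$$
coupled with the no-flux condition $n \cdot (\mu \nabla \theta^* - \theta^* \vec{P}) = 0$ on $\partial\Omega$. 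The task is reduced to solving this overdetermined first-order system for $\vec{P}$.

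Integrating the identity over $\Omega$ and using the boundary condition forces the compatibility requirement $M(t) \frac{d}{dt}\int_\Omega \tilde{K}\,dx = 0$, which holds exactly because of \eqref{eq:m2m}; indeed, this is precisely the reason $M(t)$ was defined via the Bernoulli equation \eqref{eq:MMM} in Lemma \ref{lem:M}. Given compatibility, I would introduce a scalar potential $\phi(x,t)$ solving the Neumann Poisson problem
$$
\Delta \phi(\cdot,t) \;=\; -M(t)\,\partial_t \tilde{K}(\cdot,t) \ \text{in } \Omega, \qquad n \cdot \nabla \phi = 0 \ \text{on } \partial\Omega,
$$
normalized by $\int_\Omega \phi(\cdot,t)\,dx = 0$. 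Standard elliptic theory yields a unique such $\phi$, which is $T$-periodic and as smooth as $\tilde{K}$ permits. Then I would set
$$
\vec{P}(x,t) \;=\; \frac{\mu(x,t) \nabla \theta^*(x,t) + \nabla \phi(x,t)}{\theta^*(x,t)},
$$
which is well-defined because $\theta^* > 0$ under \eqref{eq:R} and $T$-periodic in $t$. By construction, $\mu \nabla \theta^* - \theta^* \vec{P} = -\nabla \phi$, so the divergence identity and the no-flux boundary condition are satisfied simultaneously.

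With this $\vec{P}$, the distribution $\theta^* = M\tilde{K}$ is a positive $T$-periodic classical solution of \eqref{eq:theta}, and by the uniqueness of the positive $T$-periodic solution for logistic periodic-parabolic equations \cite{Hess1991} (invoked already in Subsection \ref{subsect:2.1}), it is the globally attracting stable state, while Lemma \ref{lem:IFD} already guarantees it is an IFD in the sense of Definition \ref{def:IFD}. I do not expect any serious obstacle here: the conceptual difficulty was already resolved in Subsection \ref{subsect:2.1}, where $M(t)$ is engineered so that the mean of $\tilde{K}$ is $t$-independent, and this is the sole nontrivial ingredient that makes the Neumann problem for $\phi$ solvable. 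The remaining items --- regularity, $T$-periodicity, and positivity of the constructed $\vec{P}$ --- are routine consequences of \eqref{eq:R} and elliptic regularity.
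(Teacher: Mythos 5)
Your proposal is correct and follows essentially the same route as the paper: both reduce the problem to a Neumann Poisson problem for a potential whose Laplacian is (a multiple of) $\partial_t\tilde{K}$, use \eqref{eq:m2m} for solvability, and define the advection by dividing the resulting flux correction by $\theta^*$ (your $\phi$ is just $-M(t)q$ in the paper's notation, so your $\vec{P}$ coincides with the paper's $\vec{P}^*$ in \eqref{eq:P}). The verification via \eqref{eq:misifd} and the appeal to uniqueness of the positive periodic solution also match the paper's argument.
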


\begin{proof}[Proof of Theorem \ref{thm:existence}]
Given  $r(x,t), K(x,t)$ and $\mu(x,t)$, and let $M(t)$ and $\tilde{K}(x,t)$ be given respectively by \eqref{eq:MT} and \eqref{eq:m}. Since \eqref{eq:R} holds, Lemma \ref{lem:IFD} says that the function $\theta^*(x,t)$ defines an IFD. 

Next, we define $\vec{P}^*(x,t)$ in terms of $\mu(x,t)$, as in \cite[Eqn. (8)]{Cantrell2018}:
\begin{equation}\label{eq:P}
\vec{P}^*(x,t) = \frac{1}{\tilde{K}(x,t)} \left[ \mu \nabla \tilde{K}(x,t) - \nabla q \right], 
\end{equation}
where, for each $t$, $q(x,t)$ is the unique solution to
$$
\Delta q = \frac{\partial \tilde{K}}{\partial t} \quad \text{ in } \Omega, \quad \text{ and }\quad n \cdot \nabla q = 0 \quad \text{ on }\partial\Omega.
$$
The function $q$ is well-defined for each $t$, since we have $\int_\Omega \frac{\partial \tilde{K}}{\partial t}(x,t)\,dx =0$ for all and $t$, by \eqref{eq:m2m}. 
\begin{claim}
The unique time-periodic solution corresponding to the dispersal strategy $(\mu, \vec{P}^*)$ is given by $\theta^*(x,t) = M(t)\tilde{K}(x,t)$. i.e. it is the unique positive solution to
\begin{empheq}[left = \empheqlbrace]{align}
&\frac{\partial \theta^*}{\partial t} = \nabla \cdot[ \mu \nabla \theta^* - \theta^*\vec{P}^*] + r(x,t)\theta^*\left[ 1 - \frac{\theta^*(x,t)}{K(x,t)}\right] &\text{ in }&\Omega\times[0,T], \label{eq:IFD1a}\\
&n \cdot [\mu \nabla \theta^* - \theta^* \vec{P}^*] = 0&\text{ on }&\partial\Omega \times [0,T], \label{eq:IFD1b}\\
&\theta^*(x,0) = \theta^*(x,T) & \text{ in }&\Omega. \label{eq:IFD1c}
\end{empheq}
\end{claim}
To show \eqref{eq:IFD1c}, we first observe that $M(t)$ is $T$-periodic, by Lemma \ref{lem:M}. Thus $\tilde{K}(x,t)$ is also $T$-periodic in $t$, by \eqref{eq:m}. Since $\theta^*(x,t) = M(t)\tilde{K}(x,t)$, we deduce \eqref{eq:IFD1c}.

Next, we show \eqref{eq:IFD1b}. 
\begin{align*}
n \cdot [\mu \nabla \theta^* - \theta^* \vec{P}^*] &= M(t) n\cdot[\mu \nabla \tilde{K}- \tilde{K} \vec{P}^*]  \\
&= M(t) \left[ \mu (n \cdot \nabla \tilde{K}) - \tilde{K} (n \cdot \vec{P}^*)  \right]\\
&= M(t) \left[ \mu (n \cdot \nabla \tilde{K}) - \tilde{K} \left(\frac{n}{\tilde{K}} \right)\cdot (\mu \nabla \tilde{K} -\nabla q)\right]\\
&= M(t) (n \cdot  \nabla q) = 0 
\end{align*}
where the third equality follows by taking the scalar product of both sides of \eqref{eq:P} with the outer normal vector $n$, and the last equality follows from the fact that $q$ satisfies a Neumann boundary condition on $\partial\Omega$. 

Before going further, we multiply both sides of \eqref{eq:P} by $\tilde{K}$ and take divergence of both sides to obtain
\begin{equation}\label{eq:qmt}
\nabla\cdot(\tilde{K} \vec{P}) = \nabla \cdot(\mu \nabla \tilde{K} - \nabla q) = \nabla \cdot(\mu \nabla \tilde{K}) - \frac{\partial \tilde{K}}{\partial t}
\end{equation}
Now we proceed to show \eqref{eq:IFD1a}.
\begin{align*}
&\quad \frac{\partial \theta^*}{\partial t}-  \nabla \cdot [ \mu \nabla \theta^* - \theta^* \vec{P}^*]\\
&= \frac{\partial }{\partial t}(M\tilde{K}) -  \nabla \cdot [\mu \nabla (M(t)\tilde{K}(x,t)) - (M(t)\tilde{K}(x,t)) \nabla \vec{P}^*]\\
&=\tilde{K}(x,t)M'(t)  + M(t) \left\{\frac{\partial \tilde{K}}{\partial t}(x,t) - \nabla \cdot [\mu \nabla \tilde{K}(x,t) - \tilde{K}(x,t)  \vec{P}^*] \right\}\\
&= \tilde{K}(x,t) M'(t)
\end{align*}
where we used  \eqref{eq:qmt} in the last equality. Taking \eqref{eq:misifd} into account, 
$$
\frac{\partial \theta^*}{\partial t}-  \nabla \cdot [ \mu \nabla \theta^* - \theta^*\nabla \vec{P}^*] = r \tilde{K} M\left(1 - \frac{\theta^*}{K}\right). 
$$
This proves \eqref{eq:IFD1a}.
\end{proof}

\section{An IFD is necessary and sufficient for  ESS and NIS}\label{sec:3}

We consider the following class of reaction-diffusion-advection models in the spatially heterogeneous and temporally periodic setting:
\begin{equation}\label{eq:system}
\left\{\begin{array}{ll}
\frac{\partial u}{\partial t} = \nabla\cdot[\mu(x,t) \nabla u - u \vec{P}(x,t)] + r(x,t)u\left(1 - \frac{u + v}{K(x,t)}\right) &\text{ in }\Omega \times (0,\infty), \\
\frac{\partial v}{\partial t} = \nabla\cdot[\nu(x,t) \nabla v- v \vec{Q}(x,t)] +  r(x,t)v\left(1 - \frac{u + v}{K(x,t)}\right)  &\text{ in }\Omega \times (0,\infty), \\
n \cdot [\mu(t,x) \nabla u + u \vec{P}(x,t)] = 0   & \text{ on }\partial\Omega \times (0,\infty)\\
n \cdot [\nu(t,x) \nabla v + v \vec{Q}(x,t)] = 0   & \text{ on }\partial\Omega \times (0,\infty), 
\end{array}\right.
\end{equation}
where the functions $\mu,\nu, r, K$ are positive, $T$-periodic in $t$, and the vector fields $\vec{P},\vec{Q}$ are $T$-periodic in $t$; 
and $n$ denotes the outer unit normal vector on $\partial\Omega$. We assume that there is no net movement into or out of the domain, as is reflected by an application of divergence theorem:
\begin{equation}
\int_\Omega \nabla\cdot[\mu(x,t) \nabla u - u \vec{P}(x,t)]\,dx = \int_{\partial\Omega} n \cdot  [\mu(t,x) \nabla u - u \vec{P}(x,t)]\,dx = 0.
\end{equation}
A similar equality holds for $v$.  
We note that the competition system \eqref{eq:system} has one trivial periodic solution $(0,0)$, and two semitrivial periodic solutions $(\theta^*,0)$ and $(0,v^*)$, where $\theta^*$ is the unique positive periodic solution of \eqref{eq:theta}, and $v^*$ is the unique positive solution to 
\begin{equation}\label{eq:vstar}
\left\{\begin{array}{ll}
\frac{\partial v^*}{\partial t} = \nabla \cdot [\nu(x,t) \nabla v^* - v^* \vec{Q}(x,t)] + r(x,t) v^*\left( 1 - \frac{v^*}{K(x,t)}\right) &\text{ in }\Omega\times [0,T],\\
n \cdot[\nu(x,t) \nabla v^* - v^* \vec{Q}(x,t)] = 0 &\text{ on }\partial\Omega \times [0,T],\\
v^*(x,0) = v^*(x,T) &\text{ in }\Omega.
\end{array}\right.
\end{equation}

There are two main results in this section. The first one is to show that, for the competition model \eqref{eq:system}, the dispersal strategies which generate IFD always dominate the strategies which do not. That is, the dispersal strategies which generates IFD are both ESS and NIS.

\begin{theorem}\label{thm:essnis}
Let $(u,v)$ be a solution of the competition model \eqref{eq:system} with nonnegative, nontrivial initial data. Suppose  
\begin{description}
\item[{\bf(C)}]  $\theta^*$ is IFD and $v^*$ is not IFD,
\end{description}
where IFD is defined in the sense of Definition \ref{def:IFD}. Then the semitrivial periodic solution $(\theta^*,0)$ is globally asymptotically stable among all nontrivial nonnegative initial data, i.e.
$$
\|(u(\cdot,t), v(\cdot,t)) - (\theta^*(\cdot,0), 0)\|_{C(\bar\Omega)} \to 0 \quad \text{ as } \,\, t \to \infty.
$$
\end{theorem}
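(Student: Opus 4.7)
The plan is to apply the substitution $u = M(t)U$, $v = M(t)V$ from Section \ref{sec:4}, which via identity \eqref{eq:m} transforms \eqref{eq:system} into the equivalent competition system
\begin{equation*}
\begin{cases}
U_t = \nabla\cdot[\mu\nabla U - U\vec P] + \tilde r\, U\bigl(1 - (U+V)/\tilde K\bigr),\\
V_t = \nabla\cdot[\nu\nabla V - V\vec Q] + \tilde r\, V\bigl(1 - (U+V)/\tilde K\bigr),
\end{cases}
\end{equation*}
on $\Omega \times (0,\infty)$ with the same no-flux boundary conditions, where $\tilde r := rM\tilde K/K > 0$ is $T$-periodic and $\int_\Omega \tilde K(x,t)\,dx$ is independent of $t$ by Lemma \ref{lem:M}. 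The two semitrivial periodic states become $(\tilde K,0)$ and $(0,V^*)$ with $V^* := v^*/M$, and hypothesis (C) translates exactly to $V^* \not\equiv \tilde K$. Global asymptotic stability of $(\theta^*, 0)$ in \eqref{eq:system} is therefore equivalent to that of $(\tilde K, 0)$ in the reduced system, which is the framework of \cite{Cantrell2018}.

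Within the reduced setting I would follow the standard three-step program for two-species competitive periodic-parabolic systems (see \cite{Hess1991}):
\textbf{(a)} $(\tilde K, 0)$ is linearly (weakly) stable---immediate since the $V$-linearization at $(\tilde K, 0)$ has coefficient $\tilde r(1 - \tilde K/\tilde K) \equiv 0$, reducing to the conservative dispersal operator $V \mapsto \nabla\cdot[\nu\nabla V - V\vec Q]$ whose principal Floquet exponent is zero.
\textbf{(b)} $(0, V^*)$ is linearly unstable to the $U$-invader---the principal Floquet exponent of the $U$-linearization $\varphi_t = \nabla\cdot[\mu\nabla\varphi - \varphi\vec P] + \tilde r(1 - V^*/\tilde K)\varphi$ is strictly positive, by a variational lower bound against the invariant distribution $\tilde K$ of the resident dispersal operator (which satisfies $\tilde K_t = \nabla\cdot[\mu\nabla\tilde K - \tilde K\vec P]$) combined with a Cauchy--Schwarz inequality applied to the period-integrated $V^*$-equation, both giving strict positivity precisely because $V^* \not\equiv \tilde K$.
\textbf{(c)} No positive $T$-periodic coexistence state exists.
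Combining (a)--(c) with the Hess-type trichotomy for monotone competitive periodic-parabolic systems yields convergence of every nontrivial nonnegative orbit to $(\tilde K, 0)$, and pulling back via $u = MU$, $v = MV$ gives the claim.

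Step (c) is the main obstacle. Arguing by contradiction, suppose $(U^\sharp, V^\sharp)$ is a positive $T$-periodic coexistence state; the crux is to prove $U^\sharp + V^\sharp \equiv \tilde K$. Granting this identity, the $V^\sharp$-equation collapses to $V^\sharp_t = \nabla\cdot[\nu\nabla V^\sharp - V^\sharp\vec Q]$ and the $U^\sharp$-equation to $U^\sharp_t = \nabla\cdot[\mu\nabla U^\sharp - U^\sharp\vec P]$. Their positive $T$-periodic solutions each form a one-dimensional cone, and $\tilde K$ spans the $U$-cone; hence $U^\sharp = \lambda\tilde K$ for some $\lambda \in (0,1)$ and $V^\sharp = (1-\lambda)\tilde K$. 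Then $\tilde K$ itself satisfies the conservative $V$-equation, so $V \equiv \tilde K$ is a solution of the full reduced $V$-equation, and uniqueness forces $V^* = \tilde K$, contradicting hypothesis (C). The identity $U^\sharp + V^\sharp \equiv \tilde K$ is itself delicate: since the two species use different dispersal operators the sum satisfies no closed scalar equation, and a Lyapunov / test-function argument adapted from \cite{Cantrell2018}, exploiting both the monotone structure and the neutrality of the linearization at $(\tilde K, 0)$, is required.
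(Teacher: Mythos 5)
Your proposal follows essentially the same route as the paper: the change of variables $(u,v)=M(t)(U,V)$ reducing to the framework of \cite{Cantrell2018}, the instability of $(0,V^*)$, the nonexistence of positive $T$-periodic coexistence states, and the conclusion via the Hsu--Smith--Waltman trichotomy for monotone competitive periodic-parabolic systems. The deductions you draw \emph{after} granting $U^\sharp+V^\sharp\equiv\tilde K$ (that $U^\sharp=c\,\tilde K$ with $c$ constant, that $V^\sharp=(1-c)\tilde K$ then forces $v^*=M\tilde K$, contradicting (C)) match the paper's Step 1 exactly, and your step (b) is in substance the paper's Step 2.

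The one place where you stop short is precisely where the paper does its real work: you defer the identity $U^\sharp+V^\sharp\equiv\tilde K$ to ``a Lyapunov / test-function argument adapted from \cite{Cantrell2018}'' without exhibiting it. For the record, the paper's computation is: since $(\tilde K,0)$ solves the reduced system, $\tilde K$ satisfies the pure-dispersal equation $\tilde K_t=\nabla\cdot[\mu\nabla\tilde K-\tilde K\vec P]$; one multiplies the $U$-equation by $\tilde K/U$ and this equation by $\log U$, adds, integrates over $\Omega\times[0,T]$, and completes the square to obtain
\begin{equation*}
0=\iint\Bigl\{\mu\tilde K\Bigl|\tfrac{\nabla U}{U}-\tfrac{\nabla\tilde K}{\tilde K}\Bigr|^{2}-\mu\tfrac{|\nabla\tilde K|^{2}}{\tilde K}+\nabla\tilde K\cdot\vec P+\hat r\tilde K(\tilde K-U-V)\Bigr\}\,dx\,dt,
\end{equation*}
then uses the key identity $\iint\bigl[-\mu|\nabla\tilde K|^{2}/\tilde K+\nabla\tilde K\cdot\vec P\bigr]dx\,dt=0$ (obtained by testing the $\tilde K$-equation against $\log\tilde K$ and using periodicity) together with the period-integrated species equations to arrive at $0=\iint\{\mu\tilde K|\nabla U/U-\nabla\tilde K/\tilde K|^{2}+\hat r(\tilde K-U-V)^{2}\}$, which yields both the identity you need and, by the same token applied to the eigenfunction at $(0,V^*)$, the strict negativity of $\sigma_1$ in your step (b). So the skeleton is right and coincides with the paper's, but a complete proof requires carrying out this computation; in particular the cancellation identity above, which is what makes the sum of squares close, is not something you can take for granted from the constant-total-resource case without verifying that $\tilde K$ indeed solves the conservative dispersal equation in the transformed variables.
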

Theorem \ref{thm:essnis} is proved in Subsection \ref{sec:4.1}.

The second main result says that the condition $\theta^*$ being an IFD in Theorem \ref{thm:essnis} is necessary.
\begin{theorem}\label{thm:nnifd}
Let $(\theta^*(x,t),0)$ be a $T$-periodic solution of \eqref{eq:system}. If $\theta^*(x,t)$ is not IFD, then there exists $(\mu,\vec{Q})$ such that a second species with diffusion rate $\mu$ and advection rate $\vec{Q}$ can invade when rare, i.e. the $(\theta^*(x,t),0)$ is unstable in \eqref{eq:system}. 
\end{theorem}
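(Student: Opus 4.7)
The plan is to reduce the theorem to producing one dispersal pair $(\nu,\vec{Q})$ for which the principal $T$-periodic eigenvalue $\lambda(\nu,\vec{Q})$ of the linearization of \eqref{eq:system} at $(\theta^*,0)$ in the $v$-direction,
\begin{equation*}
\lambda\varphi = \varphi_t - \nabla\cdot[\nu\nabla\varphi - \varphi\vec{Q}] - r(x,t)\left(1-\tfrac{\theta^*(x,t)}{K(x,t)}\right)\varphi \quad\text{in }\Omega\times[0,T],
\end{equation*}
taken with no-flux boundary data and $\varphi>0$, $T$-periodic, is strictly negative. By standard periodic-parabolic stability theory (cf.\ \cite{Hess1991}), $\lambda(\nu,\vec{Q})<0$ is equivalent to $(\theta^*,0)$ being linearly (and dynamically) unstable in the $v$-direction, which is what the theorem claims.

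Assuming that $(r,K)$ satisfies \eqref{eq:R}, the proof is then essentially immediate by combining Theorems \ref{thm:existence} and \ref{thm:essnis}. Fix any positive $T$-periodic $\nu(x,t)$; by Theorem \ref{thm:existence} there is a vector field $\vec{Q}^*(x,t)$, explicitly given by the formula \eqref{eq:P}, such that the unique positive $T$-periodic single-species solution associated with $(\nu,\vec{Q}^*)$ equals $v^*(x,t) = M(t)\tilde{K}(x,t)$ and is an IFD in the sense of Definition \ref{def:IFD}. The two semitrivial solutions of the resulting competition system \eqref{eq:system} are then $(\theta^*,0)$ (not IFD, by hypothesis) and $(0,v^*)$ (IFD, by construction). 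Since hypothesis (C) of Theorem \ref{thm:essnis} depends only on which of the two semitrivial solutions is IFD, I would apply that theorem with the roles of $u$ and $v$ interchanged to conclude that $(0,v^*)$ is globally asymptotically stable among nontrivial nonnegative initial data. In particular $(\theta^*,0)$ cannot be stable, and the pair $(\nu,\vec{Q}^*)$ is the desired invading strategy.

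The main obstacle is the case where condition \eqref{eq:R} fails for $(r,K)$: by Corollary \ref{lem:necMm} no IFD can then exist, so the above construction is unavailable. I would handle this case by a direct pathwise argument. Since $\theta^*$ is not IFD, $F(x,t):=r(x,t)(1-\theta^*(x,t)/K(x,t))$ is nontrivially $x$-dependent, and using the zero-mean identity $\int_0^T\!\!\int_\Omega \theta^* F\,dx\,dt=0$ that follows by integrating \eqref{eq:theta} and invoking $T$-periodicity of $\theta^*$, one aims to produce a smooth $T$-periodic path $\gamma:[0,T]\to\bar\Omega$ with strictly positive pathwise fitness $F_{\rm path}(\gamma)=\int_0^T F(\gamma(t),t)\,dt>0$. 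Given such a $\gamma$, I would choose the invading strategy to have small diffusion $\nu=\epsilon$ and drift $\vec{Q}=\dot\gamma(t)+O(\epsilon)$ transporting $v$ along $\gamma$, and verify via a Gaussian trial function $\varphi_\epsilon(x,t)=\epsilon^{-n/2}\exp(-|x-\gamma(t)|^2/(2\epsilon))$ inserted into the variational characterization of $\lambda(\nu,\vec{Q})$ that $\lambda\to -F_{\rm path}(\gamma)/T<0$ in the singular limit $\epsilon\to 0$. The hardest step is extracting the positive-fitness periodic trajectory from the mere non-IFD property: the zero-mean identity forces $F$ to change sign in $(x,t)$ but does not by itself furnish a continuous periodic path with strictly positive time-integrated fitness, and a finer argument using the specific structure of $\theta^*$ as a solution of the single-species equation is needed there.
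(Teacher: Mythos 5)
Your reduction to the sign of the principal periodic-parabolic eigenvalue is the right starting point, and your case (a) (when \eqref{eq:R} holds) is a legitimate alternative route: build an ideal free invader via Theorem \ref{thm:existence} and invoke Theorem \ref{thm:essnis} with the roles of $u$ and $v$ interchanged; global attractivity of $(0,v^*)$ then forces instability of $(\theta^*,0)$. The paper does not argue this way -- it gives a single direct argument that needs no case distinction -- but your route is sound where it applies. The problem is case (b), which is exactly the situation the theorem is most needed for (when \eqref{eq:R} fails and, by Corollary \ref{lem:necMm}, no IFD exists at all). There you leave the decisive step open, namely the existence of a smooth $T$-periodic path $\gamma$ with $\int_0^T F(\gamma(t),t)\,dt>0$, and you say a ``finer argument'' is needed. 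In fact this step is not hard and is where the paper's proof does its work: integrating \eqref{eq:theta} over $\Omega$ gives $\frac{d}{dt}\log\bigl(\int_\Omega\theta^*\bigr)=\int_\Omega\theta^* F\,dx\big/\int_\Omega\theta^*\,dx$, so by $T$-periodicity the time integral of the $\theta^*$-weighted spatial average of $F$ vanishes; since $\sup_{x}F(x,t)$ dominates that weighted average for every $t$, with strict inequality on the positive-measure set of times where $F(\cdot,t)$ is nonconstant (i.e.\ precisely because $\theta^*$ is not IFD), one gets $\int_0^T\sup_x F(x,t)\,dt>0$. A measurable selector realizing the supremum followed by periodic extension and mollification then produces the smooth periodic path. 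This works uniformly, whether or not \eqref{eq:R} holds, which is why the paper needs no case split.

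The second gap is in how you convert the path into a negative eigenvalue. You appeal to ``the variational characterization of $\lambda(\nu,\vec Q)$,'' but the linearization at $(\theta^*,0)$ is a non-self-adjoint periodic-parabolic operator and has no Rayleigh-quotient characterization; a Gaussian trial function cannot be ``inserted'' anywhere. Moreover your proposed drift $\vec Q=\dot\gamma(t)+O(\epsilon)$ is spatially constant to leading order, hence provides no confinement toward $\gamma(t)$: the periodic principal eigenfunction will not concentrate along the path, and near $\partial\Omega$ the transport picture breaks down against the no-flux condition. The paper instead takes $\vec Q(x,t)=\alpha(\gamma(t)-x)$ with $\alpha\to\infty$, which is a genuinely attracting drift toward the moving point $\gamma(t)$, passes to the adjoint problem, and uses a sub-solution construction (Proposition \ref{prop:a} in the Appendix, with $m(x,t)=-\tfrac12|x-\tilde\gamma(t)|^2$) to prove
\[
\limsup_{\alpha\to\infty}\lambda_1\le-\frac{1}{T}\int_0^T F(\gamma(t),t)\,dt<0 .
\]
If you want to salvage a small-diffusion version of your idea you would still need a confining component in the drift and a comparison/sub-solution argument in place of the nonexistent variational principle.
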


\begin{proof}
{Recall that $F(x,t) = r(x,t)\left(1 - \frac{\theta^*(x,t)}{K(x,t)}\right)$. 
Suppose $\theta^*(x,t)$ is not IFD, we claim that there exists a smooth, $T$-periodic curve $\gamma: \mathbb{R} \to {\rm Int}\,\Omega$ such that
\begin{equation}\label{eq:nnnifd}
\int_0^T F(\gamma(t),t)\,dt >0,
\end{equation}
Indeed, if $\theta^*(x,t)$ is not IFD, then 
there exists an open set $I \subset [0,T]$ such that $x \mapsto F(x,t)$ is nonconstant for each $t \in I$. Hence
$$
\int_0^T \sup_{x \in \Omega} F(x,t)\,dt > \frac{1}{\Omega} \int_0^T \int_\Omega F(x,t)\,dxdt =0
$$
where the last equality follows by integrating the equation \eqref{eq:theta} over $\Omega \times [0,T]$, and integration by parts. Hence, there exists a measurable function $\gamma_0: [0,T]$ such that 
$$
\int_0^T F(\gamma_0(t), t)\,dt = \int_0^T \sup_{x \in \Omega} F(x,t)\,dt >0.
$$
Now, let $\gamma_n$ be a sequence of smooth, $T$-periodic functions such that $\gamma_n \to \gamma_0$ a.e. (for instance, we extend $\gamma_0(t)$ periodically in $t$ and molify), then by the bounded convergence theorem, we have 
$$
\lim_{n\to\infty} \int_0^T F(\gamma_n(t),t)\,dt = \int_0^T F(\gamma_0(t), t)\,dt >0.
$$
Thus \eqref{eq:nnnifd} is proved, if we take $\gamma = \gamma_n$ for some $n$ sufficiently large.}

Now, let $\mu$ be a positive constant and define 
\begin{equation}
\vec{Q}(x,t) = \alpha(\gamma(t)-x),
\end{equation}
where $\alpha$ is a constant.
It suffices to show that the principal eigenvalue $\lambda_1$ of the linear periodic-parabolic problem 
\begin{equation}\label{eq:pevp1}
\begin{cases}
\frac{d\varphi}{dt} =  \nabla\cdot [\mu \nabla \varphi - \alpha\varphi (\gamma(t)-x)] + F(x,t) \varphi + \lambda_1 \varphi &\text{ in }\Omega \times [0,T],\\
n \cdot  [\nabla \varphi - \alpha\varphi (\gamma(t)-x)]=0 &\text{ on }\partial\Omega \times [0,T],\\
\varphi(x,0) = \varphi(x,T) &\text{ in }\Omega. 
\end{cases}
\end{equation}
is negative. 
We proceed by considering the adjoint problem, for which the principal eigenvalue $\lambda_1$ is the same.
\begin{equation}\label{eq:pevp1}
\begin{cases}
\frac{d\phi}{dt} =  \mu \Delta\phi + \alpha \nabla \phi \cdot(\tilde\gamma(t)-x)] + \tilde F(x,t) \phi + \lambda_1 \phi &\text{ in }\Omega \times [0,T],\\
n \cdot \nabla \phi =0 &\text{ on }\partial\Omega \times [0,T],\\
\phi(x,0) = \phi(x,T) &\text{ in }\Omega, 
\end{cases}
\end{equation}
where, after reversing time,  $\tilde\gamma(t) = \gamma(T-t)$ and $\tilde{F}(x,t) = F(x,T-t)$. By taking $m(x,t) = -\frac{1}{2}|x - \tilde\gamma(t)|^2$, {Remark \ref{rmk:a3} in the
Appendix shows the hypotheses of Proposition \ref{prop:a} in the Appendix are met
and thereby that result may be employed to show that}
$$
\limsup_{\alpha \to \infty} \lambda_1 \leq - \frac{1}{T}\int_0^T \tilde{F}(\tilde\gamma(t),t)\,dt = -\frac{1}{T}\int_0^T {F}(\gamma(t),t)\,dt. 
$$
Since the last term on the right hand side is negative (by \eqref{eq:nnnifd}), we have proved that the new species can indeed invade the non-ideal free distribution $\theta^*(x,t)$ when rare. 
\end{proof}

\begin{remark}
Let $N \geq 2$ and let $\tilde{u}=(\tilde{u}_1,...,\tilde{u}_{N})$ be a stable $T$-periodic solution of the $N$-species competition system \eqref{eqn}. Then the above can be applied to show the instablity of $(\tilde{u},0)$ in the extended $(N+1)$-species competition model, with an additional species with a suitable dispersal strategy.
\end{remark}
\begin{remark}
For environments where \eqref{eq:R} does not hold, it is proved in Theorem \ref{prop:nec} that no $T$-periodic solutions can achieve IFD. In such case, the proof of Theorem \ref{thm:nnifd} says that
no $T$-periodic solution consisting of any number of species can be an ESS. 
\end{remark}
\begin{remark}
{When the number of species is less than or equal to two, then the corresponding model generates a monotone dynamical system \cite{Smith1995}, in which any attractor is necessarily $T$-periodic. Hence, Theorem \ref{thm:nnifd} says that, in case \eqref{eq:R} does not hold, any ecological attractor that is ESS must not be periodic in time and must consist of at least three species.}
\end{remark}

\subsection{Proof of Theorem \ref{thm:essnis}}\label{sec:4.1}
In this subsection, we prove Theorem \ref{thm:essnis} by transforming the question into the context of \cite{Cantrell2018}, and then adapting the arguments therein. 
\begin{proof}[Proof of Theorem \ref{thm:essnis}]
By Corollary \ref{lem:necMm}, we deduce that \eqref{eq:R} holds and 
\begin{equation}\label{eq:C'}
\theta^*(x,t) = M(t)\tilde{K}(x,t) \quad \text{ and }\quad v^*(x,t) \not \equiv  M(t)\tilde{K}(x,t).
\end{equation}
The idea is to transform the system into one that is similar to \cite{Cantrell2018}, and adapt the arguments therein. For this purpose, write
$$
(u(x,t),v(x,t)) = M(t)(U(x,t), V(x,t))
$$
so that by \eqref{eq:m}, $(U,V)$ satisfies 
\begin{equation}\label{eq:UV}
\left\{
\begin{array}{ll}
\frac{\partial U}{\partial t} = \nabla\cdot[\mu(x,t) \nabla U - U \vec{P}(x,t)] + \hat r(x,t)U\left(\tilde{K}(x,t) - U - V\right) &\text{ in }\Omega \times (0,\infty)\\
\frac{\partial V}{\partial t} = \nabla\cdot[\nu(x,t) \nabla V - V \vec{Q}(x,t)] +  \hat{r}(x,t)V\left(\tilde{K}(x,t) - U-V\right)  &\text{ in }\Omega \times (0,\infty),\\
n \cdot [\mu(t,x) \nabla U - U \vec{P}(x,t)] = 0   & \text{ on }\partial\Omega \times (0,\infty),\\
n \cdot [\nu(t,x) \nabla V - V \vec{Q}(x,t)] = 0   & \text{ on }\partial\Omega \times (0,\infty)
\end{array}\right.
\end{equation}
where $\hat{r}(x,t) = \frac{r(x,t)M(t)}{K(x,t)}$ and $\tilde{K}(x,t)$ is given by \eqref{eq:m}.  
Since $(\theta^*(x,t),0) = (M(t)\tilde{K}(x,t),0)$ and $(0,v^*(x,t))$ are semitrivial $T$-periodic solutions of \eqref{eq:system}, we deduce that
$(\tilde{K}(x,t),0)$ and $(0, V^*(x,t)) = (0, v^*(x,t)/M(t))$ are the corresponding semitrivial $T$-periodic solutions of the transformed system \eqref{eq:UV}.

Substituting $(\tilde{K}(x,t),0)$ into the first equation of \eqref{eq:UV}, we obtain
\begin{equation}\label{eq:m0}
\frac{\partial \tilde{K}}{\partial t} = \nabla \cdot [\mu \nabla \tilde{K} - \tilde{K} \vec{P}] \quad \text{ in }\Omega\times[0,T].
\end{equation}
It remains to show that $(\tilde{K}(x,t),0)$ is a globally asymptotically stable solution of \eqref{eq:UV}. 

\noindent {\bf Step 1.} We claim that \eqref{eq:UV} has no positive periodic solutions. To this end, let $({U},{V})$ be a nonnegative periodic solution of \eqref{eq:UV}, such that $U>0$. It remains to show that $V(x,t) = 0$.

Multiply the first equation of \eqref{eq:UV} by $\frac{\tilde{K}(x,t)}{U(x,t)}$, then multiply \eqref{eq:m0} by $\log U$, and add the two to get
\begin{align*}
\frac{\partial}{\partial t} (\tilde{K} \log U) &= \frac{\partial \tilde{K}}{\partial t} \log U + \frac{\tilde{K}}{U}\frac{\partial U}{\partial t}\\
&= \frac{\tilde{K}}{U}\nabla \cdot [\mu \nabla U - U\vec{P}] + \log U \nabla \cdot [\mu \nabla \tilde{K} -\tilde{K} \vec{P}] + \hat r \tilde{K}(\tilde{K} - U - V).
\end{align*}
Integrate by parts over $\Omega$, and then integrating in $[0,T]$, we get
\begin{align}
0 &= \iint \left\{ - \left[ \frac{\nabla \tilde{K}}{U} - \frac{\tilde{K}\nabla U}{U^2} \right]\left[\mu \nabla U - U\vec{P}\right] - \frac{\nabla U}{U}\left[ \mu \nabla \tilde{K} - \tilde{K} \vec{P}\right] + \hat{r}\tilde{K}(\tilde{K} - U - V)\right\}dx\,dt \notag\\
&= \iint \left\{ \mu \tilde{K} \frac{|\nabla U|^2}{U^2} - 2\mu \frac{\nabla \tilde{K}\cdot \nabla U}{U} + \nabla \tilde{K} \cdot \vec{P} + \hat{r}\tilde{K}(\tilde{K}  - U - V)\right\}dx\,dt \notag\\
&= \iint \left\{ \mu \tilde{K} \left| \frac{\nabla U}{U} - \frac{\nabla \tilde{K}}{\tilde{K}}\right|^2 - \mu \frac{|\nabla \tilde{K}|^2}{\tilde{K}} + \nabla \tilde{K} \cdot \vec{P}   + \hat{r} \tilde{K}(\tilde{K}  - U - V)\right\}dx\,dt \label{eq:sub1}
\end{align}
Next, we claim the following identity concerning $\tilde{K}(x,t)$ and $P(x,t)$  (see also \cite[P.10]{Cantrell2018}) 
\begin{equation}\label{eq:idcc}
\iint \left[ - \mu \frac{|\nabla \tilde{K}|^2}{\tilde{K}} + \nabla \tilde{K} \cdot \vec{P} \right]\,dxdt =0.
\end{equation}
Indeed, 
\begin{align*}
\int_\Omega \left[ - \mu \frac{|\nabla \tilde{K}|^2}{\tilde{K}} + \nabla \tilde{K} \cdot \vec{P}\right]\,dx &= -\int_\Omega (\nabla \log \tilde{K})\cdot [\mu \nabla \tilde{K} - \tilde{K} \vec{P}]\,dx \\
&= \int_\Omega \frac{\partial \tilde{K}}{\partial t} \log \tilde{K} \,dx \\
&=  \int_\Omega \frac{\partial}{\partial t} ( \tilde{K} \log \tilde{K} - \tilde{K})\,dx,
\end{align*}
where the first and third equalities follow by simply rewriting the integrands; the second equality follows by multiplying both sides of \eqref{eq:m0} by $\log \tilde{K}$ and integrating by parts. 
Integrating in $t \in [0,T]$, we obtain \eqref{eq:idcc}. 

Subsituting \eqref{eq:idcc} into \eqref{eq:sub1}, we deduce that
\begin{equation}\label{eq:vdot1}
0=\iint  \left\{ \mu \tilde{K} \left| \frac{\nabla U}{U} - \frac{\nabla \tilde{K}}{\tilde{K}}\right|^2   +\hat r \tilde{K}( \tilde{K} - U - V)\right\}dx\,dt.
\end{equation}
Furthermore, we may integrate the first and second equations of \eqref{eq:UV} over $\Omega \times [0,T]$, then
\begin{equation}\label{eq:vdot2}
\iint \hat{r} U(\tilde{K}-U-V) \,dxdt = 0 = \iint  \hat{r} V(\tilde{K}-U-V)\,dxdt.
\end{equation}
Combining \eqref{eq:vdot1} and \eqref{eq:vdot2}, we have
\begin{equation}\label{eq:vdot}
0 = \iint  \left\{ \mu \tilde{K} \left| \frac{\nabla U}{U} - \frac{\nabla \tilde{K}}{\tilde{K}}\right|^2 + \hat{r}(\tilde{K} - U-V)^2\right\} \,dx\,dt.
\end{equation}
Therefore, 
\begin{equation}\label{eq:mmm0}
U(x,t)+ V(x,t) \equiv \tilde{K}(x,t)
\end{equation} 
and there exists a $T$-periodic function $0\leq c(t) \leq 1$ such that
\begin{equation}\label{eq:mmmm0}
U(x,t) = c(t)\tilde{K}(x,t) \quad \text{ in }\Omega \times [0,T].
\end{equation}
Substituting \eqref{eq:mmm0} and \eqref{eq:mmmm0} into the first equation of \eqref{eq:UV} and compare with \eqref{eq:m0}, we deduce that $c(t) = c_0$ for some constant $c_0 \in [0,1]$. Since both $U$ and $V$ are positive, we have $c_0 \in (0,1)$. Hence, we must have $V(x,t) = (1-c_0) \tilde{K}(x,t)$ for some $c_0 \in (0,1)$. 
Subsituting that into the second equation of \eqref{eq:UV}, we deduce that $\tilde{K}(x,t)$ is a positive, $T$-periodic solution of 
$$
\left\{
\begin{array}{ll}
\frac{\partial m}{\partial t} = \nabla \cdot \left[ \nu \nabla \tilde{K} - \tilde{K} \vec{Q}\right] = 0 \quad&\text{ in } \Omega\times [0,T],\\
n \cdot [\nu \nabla \tilde{K} - \tilde{K} \vec{Q}]  =0 &\text{ on }\partial\Omega \times [0,T].
\end{array}\right.
$$ 
This implies, by way of \eqref{eq:misifd}, 
\begin{align*}
\frac{\partial}{\partial t}(M\tilde{K}) - \nabla \cdot \left[ \nu \nabla (M\tilde{K}) - (M\tilde{K}) \vec{Q}\right] &= M'(t)\tilde{K}(x,t)   \\
&= r(x,t)M(t)\tilde{K}(x,t) \left( 1- \frac{M(t)\tilde{K}(x,t)}{K}\right). 
\end{align*}
and that $n \cdot  [\nu \nabla (M\tilde{K}) - (M\tilde{K}) \vec{Q}]  =0$ on $\partial\Omega \times [0,T]$. 
By uniqueness of positive solution to \eqref{eq:vstar}, this means $v^*(x,t)=M(t)\tilde{K}(x,t)$. This is a contradiction to \eqref{eq:C'}. Hence we conclude that there exists no positive periodic solution $(U,V)$ to \eqref{eq:UV}. 

%

\noindent {\bf Step 2.} The $T$-periodic solution $(0,V^*)$ of \eqref{eq:UV} is repelling. i.e. for solutions $(U,V)$ with non-negative, non-trivial initial data, $\|(U,V)(\cdot,t) - (0, V^*)(\cdot,t)\| \not\to 0$.

It suffices to show that $(0,V^*)$ is linearly unstable. (See, e.g. \cite[Theorem 1.3]{Lam2016}.) To this end, let $\sigma_1$ be the principal eigenvalue of the following periodic-parabolic eigenvalue problem (see, e.g. \cite{Hess1991} for the spectral theory of linear periodic-parabolic operators):
\begin{equation}\label{eq:Psi}
\begin{cases}
\Psi_t = \nabla \cdot \left[ \mu \nabla \Psi - \Psi \vec{P}\right]  + \hat{r}(x,t)\left[\tilde{K}(x,t)- V^*(x,t)\right]\Psi + \sigma_1 \Psi &\text{ for }x \in \Omega,\, t \in [0,T],\\
n \cdot [\mu \nabla \Psi - \Psi\vec{P}]=0 &\text{ for }x \in \partial\Omega,\, t\in[0,T],\\
\Psi(x,0) = \Psi(x,T) &\text{ for }x \in \Omega,
\end{cases}
\end{equation}
where $V^*(x,t) = v^*(x,t)/M(t)$ is the unique positive periodic solution of
\begin{equation}\label{eq:Vstar}
\left\{
\begin{array}{ll}
V^*_t  = \left[ \nu V^*_x - V^* Q\right]_x + \hat rV^*( \tilde{K} - V^*) & \text{ in }\Omega \times [0,T],\\
n \cdot [\nu \nabla V^* - V^* \vec{Q}]  =0 &\text{ on }\partial\Omega \times [0,T].
\end{array}\right.
\end{equation}
By arguing as in Step 1, we obtain
$$
 = \iint \left\{ \mu m \left|\frac{\nabla \Psi}{\Psi} - \frac{\nabla \tilde{K}}{\tilde{K}}\right|^2 - \mu \frac{|\nabla \tilde{K}|^2}{\tilde{K}} + \nabla \tilde{K} \cdot \vec{P} + \hat{r} \tilde{K}(\tilde{K} - V^*) + \hat{r} \tilde{K} \sigma_1  \right\}dx\,dt.
$$
Using the identity \eqref{eq:idcc}, we dedeuce that
\begin{equation}\label{eq:id1}
0 = \iint \left\{\mu m \left|\frac{\nabla \Psi}{\Psi} - \frac{\nabla \tilde{K}}{\tilde{K}}\right|^2  + \hat{r} \tilde{K}(\tilde{K} - V^*) + \hat{r} \tilde{K} \sigma_1  \right\} dx\,dt.
\end{equation}
Furthermore, we may integrate \eqref{eq:Vstar} over $\Omega \times[0,T]$ to obtain
\begin{equation}\label{eq:id2}
\iint \hat{r}V^*(\tilde{K} - V^*)\,dx\,dt = 0.
\end{equation}
Subtracting \eqref{eq:id2} from \eqref{eq:id1}, we obtain
$$
0 = \iint \left\{\mu m \left|\frac{\nabla \Psi}{\Psi} - \frac{\nabla \tilde{K}}{\tilde{K}}\right|^2  + \hat{r} (\tilde{K} - V^*)^2 + \hat{r} \tilde{K} \sigma_1  \right\} dx\,dt.
$$
Hence $\sigma_1 \leq 0$. We claim that $\sigma_1 <0$. Indeed, suppose to the contrary that $\sigma_1 = 0$, then $V^* =\tilde{K}$ and thus $v^* = M(t)V^*(x,t) =M(t)\tilde{K}(x,t)$, which is impossible in view of \eqref{eq:C'}. Hence $\sigma_1 <0$, i.e. $(0,v^*)$ is linearly unstable.

\noindent {\bf Step 3.}  Conclude with theory of monotone dynamical systems. 

By \cite[Section 3]{Cantrell2018}, the system \eqref{eq:UV} defines a monotone dynamical system. 
By Step 1, we may invoke \cite[Theorem A(c)]{Hsu1996} (see also \cite[Theorem 1.3]{Lam2016}) to deduce that any internal trajectory of \eqref{eq:UV} {converges to either $(\tilde{K},0)$} or $(0,V^*)$. By Step 2, the possibility $(U,V) \to (0,V^*)$ is ruled out. Thus $(U,V) \to (\tilde{K},0)$. i.e., the $T$-periodic solution $(\tilde{K},0)$ of \eqref{eq:UV} is globally asymptotically stable. This is equivalent to the global asymptotic stability of the $T$-periodic solution $(\theta^*,0)$ of \eqref{eq:system}.
\end{proof}

\section{Conclusions}\label{sec:5}

      Mathematically, our results extend those of \cite{Averill2012, Cantrell2010, Cantrell2018} to more general forms of logistic-type models and to time-periodic environments where the total amount of resources available in the environment can vary in time.  Specifically, we extend the idea of fitness associated with a specific location in space to account for the full path that an organism takes in space and time over a periodic cycle, and extend the mathematical formulation of an ideal free distribution to general time periodic environments.  We find that as  in many  other cases,  populations using dispersal strategies that can produce  a (generalized) ideal free distribution have a competitive advantage relative to populations using strategies that do not produce an ideal free distribution.

   There are a number of biological conclusions that can be drawn from the results of our analysis.  First, 
a sufficient and necessary criterion (see \eqref{eq:R}) for the existence of such a generalized ideal free distribution is obtained. The criterion is a statement on the environment (local intrinsic growth rate $r(x,t)$ and local carrying capacity $K(x,t)$) only. It shows that while such generalized ideal free distribtuion is possible in quite general time-periodic environments, there exist certain environments where it is impossible for a single, or for that matter, any coalition of species, to achieve ideal free distribution. In the latter case, one can envision a never-ending dynamic succession of the community of species in the evolutionary timescale. Namely, starting with any group of interacting species which have already reached some stable pattern in the ecological timescale, there is always the possibility that a novel mutation arises (or a foreign invasive species arrives) and destabilizes the configuration. At the onset of such an instability, it may happen that the community becomes larger (one more species), or it may happen that one or more species becomes extinct. In either case, the whole community will then approach a different stable ecological configuration, until the next mutation/invasion takes place. 

   In the environments where such a generalized ideal free distribution exists, we showed that there exist dispersal strategies that can be identified as producing a time-periodic version of an ideal free distribution, and such strategies are evolutionarily steady and are neighborhood invaders  from the viewpoint of adaptive dynamics.  Those strategies are thus optimal in a rather strong sense. However, implementing those strategies requires that organisms use information that is nonlocal in time and space and process that information in a sophisticated way.   In environments that are static in time but vary in space, organisms can attain an ideal free distribution on the basis of purely local information on the environment, {but nonlocal information is required in time periodic environments. This is already the case in
 \cite{Cantrell2018} when the total amount of resources available is constant in time. } There is evidence that organisms  can and do use nonlocal information in deciding how to move; see \cite{Fagan}.   However, using direct environmental information to disperse optimally  in time periodic environments where the total amount of resources in the overall environment varies in time seems to require information processing  that is equivalent to solving an ordinary differential equation.   That would appear to put it beyond the reach of most organisms.  However, there are less direct mechanisms including memory and social learning that might make optimal dispersal feasible. Also, it is worth noting that bats and some cetaceans are able to use echolocation to maneuver in real time, which in a sense amounts to solving scattering theory problems on the fly. Humans can only do that by {means of} advanced technology and/or mathematics.   There is empirical evidence that some whales can accurately track the places and times where resources will be most reliably abundant over periods of years, effectively averaging out yearly variations by using memory and resource tracking \cite{whales}.  The general question of how real organisms can disperse in something close to an ideal way remains as a major challenge in understanding the evolution and ecology of dispersal.  
  
An interesting observation is that a mathematical definition of fitness associated with a movement strategy for organisms in time periodic-environments requires consideration of the specific paths individuals take through space and time, not just the spatiotemporal distribution of the population that the strategy produces.   In a related but different context, it was shown in \cite {Lopez2019} that the effectiveness of a protection zone depends on the details of its path through space and time during a periodic cycle, where in the static case there are no such geometric restrictions in the static case.  
Also, in \cite{Liu2020b} the asymptotic behavior of the periodic-parabolic eigenvalue is also connected with some periodic cycle in the associated kineitic system. 
These are only a few data points, but they suggest a possible feature that may distinguish periodic-parabolic models from elliptic models. It may be that to extend results in the elliptic case that depend on  local conditions  in space to the periodic case will often require that the condition hold on or in a neighborhood of an entire  path connecting the beginning and  end of a periodic cycle.  That would be consistent with our conclusions about the need for nonlocal information in optimal dispersal in time periodic environments.  Related ideas are discussed in  \cite {Lopez2019}.
  
  There are many direction for further research on the topics studied  here.  On the mathematical side one obvious but possibly challenging direction would be to consider full consumer-resource models rather that just logistic models.  Another would be to consider nonlocal integrodifferential models and/or patch models in time periodic environments.  On the biological side, although we have obtained results on what sort of dispersal strategies are optimal, the biological interpretations and implications of our results are not all immediately clear, so  it would be of interest to explore those more deeply.

\appendix

\section{Asymptotic behavior of the principal eigenvalue}
Let $\lambda_1$ be the principal eigenvalue of 
\begin{equation}\label{eq:a1}
\begin{cases}
\frac{d\varphi}{dt} =  \mu\Delta\varphi + \alpha \nabla m(x,t) \cdot \nabla \varphi + V(x,t) \varphi + \lambda_1 \varphi &\text{ in }\Omega \times [0,T],\\
n \cdot \nabla \varphi=0 &\text{ on }\partial\Omega \times [0,T],\\
\varphi(x,0) = \varphi(x,T) &\text{ in }\Omega, 
\end{cases}
\end{equation}
where $\alpha$ is a positive constant, $m \in C^2(\overline\Omega\times[0,T])$ and $V \in C(\overline\Omega\times[0,T])$.
We will follow the idea in \cite[Propositions 2.1 and 2.2]{Liu2020a} to prove the following result.
\begin{proposition}\label{prop:a}
Suppose there exist a smooth, $T$-periodic curve $\gamma: \mathbb{R} \to {\rm Int}\,\Omega$ and a set $U$ which is open relative to $\overline\Omega \times [0,T]$ such that
\begin{itemize}
\item[{\rm(i)}] $\{(\gamma(t),t):\, t \in [0,T]\} \subset U \subset {\rm Int}\, \Omega \times [0,T]$; 
\item[{\rm(ii)}] $m(x,t) < m(\gamma(t),t)$ if $(x,t) \in U$ and $x \neq \gamma(t)$;
\item[{\rm(iii)}] $|\nabla m(x,t)|>0$ if $(x,t) \in U$ and $x \neq \gamma(t)$.
\end{itemize}
Then
\begin{equation}\label{eq:a2}
\limsup_{\alpha \to \infty} \lambda_1 \leq - \frac{1}{T}\int_0^T V(\gamma(t),t))\,dt.
\end{equation}
\end{proposition}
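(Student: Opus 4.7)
The plan is to follow the approach of \cite[Propositions 2.1 and 2.2]{Liu2020a} and prove the upper bound on $\lambda_1$ via a family of supersolutions. By the Collatz--Wielandt characterization of the periodic-parabolic principal eigenvalue (see, e.g., \cite{Hess1991}), it suffices to exhibit, for each large $\alpha$, a smooth, positive, $T$-periodic function $\psi_\alpha$ on $\overline\Omega\times[0,T]$ with $n\cdot\nabla\psi_\alpha = 0$ on $\partial\Omega$, satisfying
\begin{equation*}
(\psi_\alpha)_t - \mu\Delta\psi_\alpha - \alpha\nabla m\cdot\nabla\psi_\alpha - V\psi_\alpha \leq \Lambda_\alpha\,\psi_\alpha \quad\text{in } \Omega\times[0,T],
\end{equation*}
with $\Lambda_\alpha \to -\tfrac{1}{T}\int_0^T V(\gamma(t),t)\,dt$ as $\alpha\to\infty$. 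The periodic-parabolic maximum principle then forces $\lambda_1\leq \Lambda_\alpha$, and \eqref{eq:a2} follows on passing to the limsup.

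The ansatz I would adopt is a logarithmic perturbation of a purely time-dependent profile,
\begin{equation*}
\psi_\alpha(x,t) = G(t)\exp(\eta_\alpha(x,t)),
\end{equation*}
where $G(t)$ is a positive $T$-periodic function and $\eta_\alpha$ is a smooth $T$-periodic function satisfying $n\cdot\nabla\eta_\alpha = 0$ on $\partial\Omega$, both to be chosen. A direct computation gives
\begin{equation*}
\frac{(\psi_\alpha)_t - \mu\Delta\psi_\alpha - \alpha\nabla m\cdot\nabla\psi_\alpha - V\psi_\alpha}{\psi_\alpha} = \frac{G'(t)}{G(t)} - V(x,t) + \bigl[(\eta_\alpha)_t - \mu\Delta\eta_\alpha - \alpha\nabla m\cdot\nabla\eta_\alpha\bigr] - \mu|\nabla\eta_\alpha|^2.
\end{equation*}
I select $\eta_\alpha$ as the solution of the linear corrector problem
\begin{equation*}
(\eta_\alpha)_t - \mu\Delta\eta_\alpha - \alpha\nabla m\cdot\nabla\eta_\alpha = V(x,t) - V(\gamma(t),t) - c_\alpha \quad\text{in } \Omega\times[0,T],
\end{equation*}
where $c_\alpha$ is the unique constant rendering the right-hand side in the range of $\partial_t - \mu\Delta - \alpha\nabla m\cdot\nabla$ (equivalently, $c_\alpha = \int[V-V(\gamma,\cdot)]\,d\rho_\alpha$ for $\rho_\alpha$ the positive $T$-periodic solution of the adjoint eigenvalue problem), and pick $G(t)$ so that $G'(t)/G(t) = V(\gamma(t),t) + \Lambda_\alpha$ with $\Lambda_\alpha$ determined by the $T$-periodicity of $G$ as $\Lambda_\alpha = c_\alpha - \tfrac{1}{T}\int_0^T V(\gamma(s),s)\,ds$. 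Using $-\mu|\nabla\eta_\alpha|^2 \leq 0$ then yields the desired supersolution inequality with this value of $\Lambda_\alpha$.

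The main obstacle is to establish $c_\alpha\to 0$ together with a uniform-in-$\alpha$ bound on $\eta_\alpha$. The three hypotheses in the statement play distinct roles here: (ii), the strict maximality of $m$ along $\gamma$, ensures that the adjoint eigenmeasure $\rho_\alpha$ concentrates on the graph of $\gamma$ as $\alpha\to\infty$, whence $c_\alpha\to 0$ by continuity of $V$; (iii), $|\nabla m| > 0$ off $\gamma$, combined with a Poincar\'e-type inequality adapted to the concentration tube, produces a spectral gap of size $O(\alpha)$ for the linear operator, from which $\eta_\alpha = O(\alpha^{-1})$ in $C^1(\overline\Omega\times[0,T])$ and in particular $\eta_\alpha\to 0$ and $\mu|\nabla\eta_\alpha|^2\to 0$ uniformly; and (i), that $\gamma\subset{\rm Int}\,\Omega$, is used to localize the construction well inside $\Omega$ and absorb the Neumann condition by a harmonic-cutoff correction supported in a collar of $\partial\Omega$. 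Verifying the concentration of $\rho_\alpha$ and the quantitative spectral-gap estimate, using precisely the structural hypotheses (ii)--(iii) on $m$, constitutes the technical heart of the argument.
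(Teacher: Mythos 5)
There is a genuine gap, and it lies exactly where you place the ``technical heart'' of your argument: the claim that hypothesis (ii) forces the adjoint periodic eigenmeasure $\rho_\alpha$ to concentrate on the graph of $\gamma$, whence $c_\alpha\to0$. Hypotheses (i)--(iii) constrain $m$ only on the relatively open set $U$; the curve $\gamma$ is merely a \emph{local} maximum curve of $m(\cdot,t)$, and $m$ is unrestricted outside $U$. Your construction, by contrast, is global: $\eta_\alpha$ solves a corrector problem on all of $\Omega\times[0,T]$ and $c_\alpha$ is an average of $V-V(\gamma(t),t)$ against the \emph{global} invariant measure $\rho_\alpha$. If $m$ has another, higher maximum curve $\tilde\gamma$ outside $U$, then $\rho_\alpha$ (proportional to $e^{\alpha m/\mu}$ in the autonomous case) concentrates on $\tilde\gamma$, $c_\alpha\to\frac1T\int_0^T\bigl[V(\tilde\gamma(t),t)-V(\gamma(t),t)\bigr]\,dt\neq0$ in general, and your final bound degenerates to $\limsup_\alpha\lambda_1\le-\frac1T\int_0^T V(\tilde\gamma(t),t)\,dt$, which can be strictly larger than the right-hand side of \eqref{eq:a2}; the proposition as stated is then not proved. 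Even in the favorable case where $\gamma$ is the unique global maximizer, the concentration of $\rho_\alpha$ and the $O(\alpha)$ spectral gap yielding $\eta_\alpha=O(\alpha^{-1})$ in $C^1(\overline\Omega\times[0,T])$ are precisely the nontrivial time-periodic estimates, and you assert rather than prove them. (There is also a small bookkeeping slip: periodicity of $G$ with $G'/G=V(\gamma(t),t)+\Lambda_\alpha$ forces $\Lambda_\alpha=-\frac1T\int_0^T V(\gamma(s),s)\,ds$, independent of $c_\alpha$, and the resulting bound is $\lambda_1\le\Lambda_\alpha-c_\alpha$, so everything still hinges on $c_\alpha\to0$.)

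The paper's proof sidesteps all global considerations by working entirely inside $U$. After normalizing so that $m(\gamma(t),t)\equiv0$ and $V(\gamma(t),t)\equiv0$, it builds a compactly supported, nonnegative generalized subsolution $\underline\varphi=\max\{g(m(x,t)),0\}$ with $g$ increasing, $g(a)=0$ and $g$ affine with small slope $\delta$ on $[b,0]$, supported in $\{(x,t)\in U:\,m(x,t)>a\}$. The differential inequality $\underline\varphi_t\le\Delta\underline\varphi+\alpha\nabla m\cdot\nabla\underline\varphi+(V+\ep)\underline\varphi$ is then checked in two regions: near the top ($m>b$) the continuity of $V$ gives $|V|<\ep/2$ and the small slope $\delta$ controls $\partial_t m$ and $\Delta m$; in the transition layer ($a<m\le b$) the advection contributes $\alpha g'(m)|\nabla m|^2$, which by (iii) and compactness is bounded below by $c\,\alpha$ and dominates everything else for $\alpha$ large. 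No information about $m$ or $V$ outside $U$ is used, which is why the local hypotheses suffice. If you want to pursue your supersolution/corrector route, you would need to localize it (e.g.\ with a cutoff supported in $U$), at which point the cutoff's interaction with the large advection essentially reproduces the paper's construction.
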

\begin{remark}\label{rmk:a3}
Suppose $\gamma(t)$ is a smooth, $T$-periodic curve such that $\nabla^2 m(\gamma(t),t)$ is negative definite for each $t \in [0,T]$, then the hypothesis (i)-(iii) can be verified. 
\end{remark}
\begin{proof}
By a rescaling in the $x$ variable, we may assume $\mu=1$. 
We also assume, without loss of generality, that 
\begin{equation}\label{eq:a3}
m(\gamma(t),t)\equiv 0 \quad \text{ and }\quad \int_0^T V(\gamma(t),t)\,dt = 0.
\end{equation}
These can be achieved if we replace $m(x,t)$ by $m(x,t) - m(\gamma(t),t)$, $V(x,t)$ by $V(x,t) - \frac{1}{T}\int_0^T V(\gamma(t),t)\,dt$, and $\lambda_1$ by $\lambda_1 + \frac{1}{T}\int_0^T V(\gamma(t),t)\,dt$. 

The goal is to show that $\limsup\limits_{\alpha \to \infty} \lambda_1 <\ep$ for each $\ep>0$. By \cite[Proposition A.1]{Liu2020a}, it suffices to construct a non-negative, non-trivial subsolution $\underline\varphi$, such that 
\begin{empheq}[left = \empheqlbrace]{align}
&\frac{d\underline\varphi}{dt} \leq   \Delta\underline\varphi + \alpha \nabla m(x,t) \cdot \nabla \underline\varphi + V(x,t) \underline\varphi + \ep \underline\varphi &\text{ in }\Omega \times [0,T], \label{eq:a1.1}\\
&n \cdot \nabla \underline\varphi =0 &\text{ on }\partial\Omega \times [0,T], \label{eq:a1.2}\\
&\varphi(x,0) = \varphi(x,T) &\text{ in }\Omega. \label{eq:a1.3}
\end{empheq}
By replacing $\underline\varphi(x,t)$ by $\underline\varphi(x,t)\exp(-\int_0^t V(\gamma(s),s)\,ds)$, we may strengthen \eqref{eq:a3} to 
\begin{equation}\label{eq:a4}
m(\gamma(t),t)\equiv 0 \quad \text{ and }\quad V(\gamma(t),t) \equiv  0.
\end{equation}
We now fix $\epsilon>0$ hereafter and construct $\underline\varphi$. By the strictness of the spatial local maximum point of $m$ at $x=\gamma(t)$, we can choose $a_0<0$ 
such that $m(x,t) \leq a_0$ on $\partial U$. By hypotheses (i) and (ii), 
we can choose $a $ small enough such that $a_0 < a < 0$ and
\begin{equation}\label{eq:a4b}
|V(x,t)| < \frac{\ep}{2} \quad \text{ in }\quad \{(x,t) \in U:\, m(x,t) >a\}. 
\end{equation}
Next, fix $b=a/2$, then $a<b<0$ and, 
by virtue of hypothesis (iii) and compactness,  
\begin{equation}
\inf |\nabla m|>0 \quad \text{with the infimum taken over }\quad \{(x,t) \in U:\, a \leq m(x,t) \leq b\}. \label{eq:a4a}\\
\end{equation}
Next, fix $\delta>0$ small such that
\begin{equation}\label{eq:adelta}
\delta( \|\partial_t m\|_{L^\infty(\Omega\times[0,T])} + \|\Delta m\|_{L^\infty(\Omega\times[0,T])}) < \frac{\ep}{2}
\end{equation}
and choose a smooth function $g :  \mathbb{R}\to \mathbb{R}$ such that 
\begin{equation}\label{eq:a4c}
g'(s)>0\,\,\text{ for }s  \in \mathbb{R}, \quad g(a) = 0, \quad  \text{ and }\quad g(s) = 1+\delta(s-b)\,\,\text{ for } s\in [b,0].
\end{equation}
Now, define
$$
\underline\varphi(x,t) = \begin{cases}
\max\{g(m(x,t)),0\} &\text{ in } U,\\
0 &\text{ otherwise.}
\end{cases}
$$

Since 
$m(x,t) \leq a_0$ on $\partial U$, it also satisfies $m(x,t) <a$ in a neighborhood of $\partial U$. Hence $\underline\varphi$ is continuous in $\overline\Omega \times [0,T]$, and satisfies \eqref{eq:a1.1} in an open set containing the complement of $U$.  Also, since $\underline\varphi(x,t)$ is $T$-periodic (since $m(x,t)$ is), and is compactly supported in the interior of $U$, 
t is clear that \eqref{eq:a1.2} and \eqref{eq:a1.3} hold. It remains to show \eqref{eq:a1.1} in $U$. 

For $(x,t) \in U$, the function $\underline\varphi$ can be written as 
$$
\underline\varphi(x,t) =\max\{g(m(x,t)),0\} =\begin{cases}
g(m(x,t)) &\text{ when  } m(x,t) >a,\\
0 &\text{ when }m(x,t) \leq a,
\end{cases}
$$ 
it is enough to verify $g(m(x,t))$ satisfies \eqref{eq:a1.1} 
in $\Omega_0:=\{(x,t) \in U:\, m(x,t)  >a\}$ in the classical sense. 
%
We argue differently for the two regions 
$$
\{(x,t) \in U:\, m(x,t) >b\} \quad \text{ and }\quad \{(x,t) \in U:\, a <m(x,t)  \leq b\}.
$$ 
In the first case $\underline\varphi(x,t) = 1+\delta (m(x,t)-b)$, and $\underline\varphi(x,t)\geq 1$, and that
\begin{align*}
&\quad \frac{d\underline\varphi}{dt} -   \Delta\underline\varphi - \alpha \nabla m(x,t) \cdot \nabla \underline\varphi - (V(x,t)+\ep) \underline\varphi \\
&=   \delta (\partial_t m -   \Delta m - \alpha |\nabla m|^2) - (V(x,t)+\ep) [1+\delta (m-b)]\\
&\leq \delta( |\partial_t m| + |\Delta m|) - \frac{\ep}{2} <0
\end{align*}
where we used \eqref{eq:a4b} for the second last inequality, and \eqref{eq:adelta} for the last inequality. In the latter case, we have $a <m(x,t) \leq b$, and that for $\alpha$ sufficient large,
\begin{align*}
&\quad \frac{d\underline\varphi}{dt} -   \Delta\underline\varphi - \alpha \nabla m(x,t) \cdot \nabla \underline\varphi - (V(x,t)+\ep) \underline\varphi \\
&= -g'(m)\left[-\partial_t m + \Delta m + \left(\frac{g''(m)}{g'(m)}+ \alpha \right)|\nabla m|^2\right]  - (V(x,t)+\ep) g(m)\\
&\leq -g'(m)\left[ - \|\partial_t m\|_\infty - \|\Delta m\|_\infty + \frac{\alpha}{2} (\inf |\nabla m|)^2\right]
\end{align*}
where we used \eqref{eq:a4b} and that $g(m)$, $g'(m)$ are both positive.
Note that the infimum of $|\nabla m|$, which is taken over the set $\{(x,t) \in U:\, a <m(x,t) \leq b\}$, is a positive real number by \eqref{eq:a4a}. Hence, 
by taking $\alpha$ sufficiently large, we deduce that \eqref{eq:a1.1} holds in $\{(x,t):\, a <m(x,t) \leq b\}$. Having verified that $\underline\varphi$ is a non-trivial, non-negative subsolution, we apply \cite[Proposition A.1]{Liu2020a} to deduce that 
$\limsup\limits_{\alpha \to \infty} \lambda_1 < \ep$. Since $\ep>0$ is arbitrarily, the proof is finished.
%
\end{proof}
\begin{remark}
The case $m(x,t) = h(x-\gamma(t))$, where $h:\mathbb{R}^n \to \mathbb{R}$ attains a strict local maximum point at $0$, can also be treated with a slight modification of the proof.  The key is to take $a<b<0$ to be small enough and such that $[a,b]$ belongs to the set of regular values of $h$.
\end{remark}

\bibliographystyle{siamplain}

\begin{thebibliography}{10}

    

  
  

\bibitem{whales} B. Abrahms, E.L. Hazen, E. O. Aikens, M. S. Savoca, J. A. Goldbogen, S. J. Bograd, M.G. Jacox, L.M. Irvine, D. M. Palacios, and B. R. Mate, Memory and resource tracking drive blue whale migrations, Proc. Natl. Acad. Sci. USA  116 (2019) 5582-5587.
  
\bibitem{Altenberg} L. Altenberg, Resolvent positive linear operators exhibit the reduction phenomenon, Proc. Natl. Acad. Sci. USA 109 (2012)  3705-3710.

\bibitem{ALG1}I. Ant\'{o}n and J. L\'{o}pez-G\'{o}mez, Principal eigenvalues of weighted periodic-parabolic eigenvalue problems, Rend. Istit. Mat. Univ. Trieste 49 (2017) 287-318.
\bibitem{ALG2}I. Ant\'{o}n, J. L\'{o}pez-G\'{o}mez, Principal eigenvalue and maximum principle for cooperative periodic-parabolic systems, Nonl. Anal. 178 (2019) 152-189.
%
%
%
%
\bibitem{Averill2012} I. Averill, Y. Lou, D. Munther, On several conjectures from evolution of dispersal, J. Biol. Dyn. 6 (2012) 117–130.

\bibitem{Cantrell2010} R.S. Cantrell, C. Cosner, Y. Lou, Evolution of dispersal and the ideal free distribution, Math. Biosci. Eng. 7 (2010) 17–36.
\bibitem{Cantrell2012JMB}  R.S. Cantrell, C. Cosner, Y. Lou, Evolutionary stability of ideal free dispersal strategies in patchy environments, J. Math. Biol. 65 (2012) 943–965.
\bibitem{Cantrell2012CAMQ} R.S. Cantrell, C. Cosner, Y. Lou, D. Ryan, Evolutionary stability of ideal free dispersal in spatial population models with nonlocal dispersal, Can. Appl. Math. Q. 20 (2012) 15–38.
\bibitem{Cantrell2017} R.S. Cantrell, C. Cosner, Y. Lou, S. Schreiber, Evolution of natal dispersal in spatially
heterogeneous environments, Math. Biosci. 283 (2017) 4565–4616.

\bibitem{Cantrell2018}  R.S. Cantrell, C. Cosner, Evolutionary stability of ideal free dispersal under spatial heterogeneity and time periodicity. Math. Biosci. 305 (2018) 71-76. 

\bibitem{Cantrell2018pre} R.S. Cantrell, C. Cosner, Y. Zhou, Ideal free dispersal and evolutionary stability in integrodifference models, preprint, 2020.

\bibitem{CN} C.  Carr\`{e}re, G. Nadin, Influence of mutations in phenotypically-structured populations in time periodic environment,  Discrete Contin. Dyn. Syst. Ser. B , in press, DOI:10.3934/dcdsb.2020075.

%
\bibitem{CDM2012}
C. Cosner, J. D\'{a}vila, S. Mart\'{i}nez. Evolutionary stability of ideal free nonlocal dispersal. J. Biol. Dyn. 6 (2012) 395-405.
%
\bibitem{Cosner2014}  C. Cosner, Reaction-diffusion-advection models for the effects and evolution of dispersal, Discrete Cont. Dyn. Syst. 34 (2014) 1701-1745.

%
%
\bibitem{odo} O. Diekmann,  {A beginner's guide to adaptive dynamics}, Banach Center Publ. {63} (2003)   47-86.

\bibitem{DHMP} J. Dockery, V. Hutson, K. Mischaikow, M. Pernarowski,
 {The evolution of slow dispersal rates: A reaction-diffusion model}, J. Math. Biol. {37} (1998)  61-83.


\bibitem{Fagan} W. F. Fagan, T.Hoffman, D. Dahiya, E. Gurarie, R.S. Cantrell, and C. Cosner, Improved foraging by switching between diffusion and advection: benefits from movement that depends on spatial context, Theor. Ecol. (2019), https://doi.org/10.1007/s12080-019-00434-w.


\bibitem{Iglesias2018} S. Figueroa Iglesias and S. Mirrahimi, Long time evolutionary dynamics of phenotypically structured populations in time-periodic environments, SIAM J. Math. Anal. 50 (2018) 5537-5568.

\bibitem{Fretwell1972} S.D. Fretwell, Populations in a Seasonal Environment, Princeton University Press, 1972.
\bibitem{Fretwell1969} S.D. Fretwell, H.L. Lucas, On territorial behaviour and other factors influencing habitat distribution in birds, Acta Biotheor. 19 (1969) 16–36.

\bibitem{Geritz2008} S.A.H. Geritz and M. Gyllenberg, The Mathematical Theory of Adaptive Dynamics, Cambridge University
Press, Cambridge, UK, 2008.

\bibitem{Geritz1998} S.A.H. Geritz, E. Kisdi, G. Meszena, J.A.J. Metz,
\textit{Evolutionarily singular strategies and the adaptive
growth and branching of the evolutionary tree}, Evol. Ecol., \textbf{12} (1998) 35-57.

%
%

\bibitem{Hao2017} W. Hao, K.-Y. Lam, Y. Lou, Concentration phenomena in an integro-PDE model for evolution of conditional dispersal, Indiana Univ. Math. J. 68 (2019) 881-923.

\bibitem{Hastings1983} A. Hastings, {Can spatial variation alone lead to selection for dispersal ?}, Theor. Pop. Biol. {24} (1983) 244-251.

\bibitem{Hess1991} P. Hess,
Periodic-Parabolic Boundary Value Problems and Positivity, Pitman, New York, 1991.
%
\bibitem{Hsu1996} S.-B. Hsu, H.L. Smith, P. Waltman, Competitive exclusion and coexistence for competitive systems on ordered Banach spaces, Trans. Amer. Math. Soc. 348 (1996) 4083-4094.

\bibitem{Hutson} V. Hutson, K. Mischaikow, P. Polacik, The evolution of dispersal
rates in a heterogeneous time-periodic environment, J. Math. Biol. 43 (2001) 501-533.

\bibitem{Korobenko2009} L. Korobenko, E. Braverman, A logistic model with carrying capacity driven diffusion, Can. Appl. Math. Q. 17 (2009) 85–104.
\bibitem{Korobenko2012} L. Korobenko, E. Braverman, On logistic models with a carrying capacity dependent diffusion: stability of equilibria and coexistence with a regularly diffusing population, Nonlinear Anal. Real World Appl. 13 (2012) 2648-2658.

\bibitem{Korobenko2014} L. Korobenko, E. Braverman, On evolutionary stability of carrying capacity driven dispersal in competition with regularly diffusing populations, J. Math. Biol. 69 (2014) 1181–1206.
%
%

\bibitem{Lam2016} K.-Y. Lam, D. Munther, A remark on the global dynamics of competitive systems on ordered Banach spaces, Proc. Amer. Math. Soc. 144 (2016) 1153-1159.

\bibitem{Lam2017b} K.-Y. Lam, Stability of Dirac concentrations in an integro-PDE model for evolution of dispersal, 
Calc. Var. Partial Differential Equations 56 (2017)  Paper No. 79, 32pp.


\bibitem{Lam2014a} K.-Y. Lam, Y. Lou,
 {Evolution of conditional dispersal: Evolutionarily
stable strategies in spatial models}, J. Math Biol. {68} (2014) 851-877.
\bibitem{Lam2014b} K.-Y. Lam, Y. Lou,
 {Evolutionarily stable and convergent stable strategies in reaction-diffusion models for conditional dispersal},
 Bull. Math. Biol. 76 (2014) 261-291.

 \bibitem{Lam2017a} K.-Y. Lam, Y. Lou, An integro-PDE model for evolution of random dispersal, J. Funct. Anal. 272 (2017) 1755-1790.
 
 \bibitem{LamLouChapter} K.-Y. Lam, Y. Lou, Persistence, Competition and Evolution, book chapter, The Dynamics of Biological Systems, A. Bianchi, T. Hillen, M. Lewis, Y. Yi eds., Springer Verlag., 31pp.

%

\bibitem{Liu} S. Liu, Y. Lou, R. Peng, M. Zhou, Monotonicity of the principal eigenvalue for a linear time-periodic parabolic operator,
Proc. Amer. Math. Soc. 147 (2019) 5291-5302.

\bibitem{Liu2020a} S. Liu, Y. Lou, R. Peng, M. Zhou, Asymptotics of the principal eigenvalue for a linear time-periodic parabolic operator I: Large advection, arXiv:2002.01330.

\bibitem{Liu2020b} S. Liu, Y. Lou, R. Peng, M. Zhou, Asymptotics of the principal eigenvalue for a linear time-periodic parabolic operator II: Small diffusion, arXiv:2002.01357.

\bibitem{Lopez2019} {J. L\'{o}pez-G\'{o}mez, Protection zones in periodic-parabolic problems, Adv. Nonlinear Stud., in press (2020) DOI:10.1515/ans-2020-2084. }


\bibitem{Smith1995} H. L. Smith, Monotone dynamical systems, An introduction to the theory of competitive and
cooperative systems, Mathematical Surveys and Monographs, 41 American Mathematical
Society, Providence, RI, 1995.

\bibitem{MP} J. Maynard Smith and G. Price, {The logic of animal conflict}, Nature {246} (1973)  15-18.


%

\end{thebibliography}

\end{document}